\setlist{nosep}
\newcommand{\R}{\mathbb{R}}
\newcommand{\until}[1]{\{1,\dots, #1\}}
\newtheorem{theorem}{Theorem}[section]
\newtheorem{lemma}{Lemma}[section]
\newtheorem{proposition}[theorem]{Proposition}
\newtheorem{remark}[theorem]{Remark} 
\newtheorem{conjectures}[theorem]{Conjectures}
      \theoremstyle{plain}
\newtheorem{definition}{Definition}[section]
\DeclareSymbolFont{bbold}{U}{bbold}{m}{n}
\DeclareSymbolFontAlphabet{\mathbbold}{bbold}
\newcommand{\vect}[1]{\mathbbold{#1}}
\newcommand{\diag}{\operatorname{diag}}
\newcommand{\setdef}[2]{\{#1 \; | \; #2\}}
\newcommand{\subscr}[2]{#1_{\textup{#2}}}
\newcommand{\map}[3]{#1: #2 \rightarrow  #3}
\renewcommand{\natural}{{\mathbb{N}}}
\newcommand{\real}{\ensuremath{\mathbb{R}}}
\newcommand{\realnonnegative}{\ensuremath{\mathbb{R}}_{\ge 0}}
\newcommand\norm[1]{\left\lVert#1\right\rVert}
\renewcommand{\norm}[1]{\|#1\|}
\newcommand\oprocendsymbol{\hbox{$\square$}}
\newcommand\oprocend{\relax\ifmmode\else\unskip\hfill\fi\oprocendsymbol}
\DeclareSymbolFont{bbold}{U}{bbold}{m}{n}
\DeclareSymbolFontAlphabet{\mathbbold}{bbold}
\newcounter{saveenum}
\newcommand{\ybar}{\eta}
\renewcommand{\ybar}{\mathbf{1}_B}
\newcommand{\ybard}{\mathbf{1}_{b^*}}
\title{Multi-group SIS Epidemics with Simplicial and Higher-Order
  Interactions\footnote{This work was supported in part by the Defense Threat
    Reduction Agency under Contract No.~HDTRA1-19-1-0017.}}
\author{Pedro Cisneros-Velarde,
  Francesco Bullo\footnote{Pedro Cisneros-Velarde (pacisne@gmail.com) and Francesco Bullo
  (bullo@ucsb.edu) are with the Center for Control, Dynamical Systems and
  Computation, University of California, Santa Barbara.}}
\begin{document}
\maketitle

\begin{abstract}
%  This paper analyzes a Susceptible-Infected-Susceptible (SIS) model of
%  epidemic propagation over hypergraphs. We focus on simplicial complexes
%  and refer to the model as to the \emph{simplicial SIS model}.
%  This paper analyzes a Susceptible-Infected-Susceptible (SIS) model of
%  epidemic propagation over hypergraphs \pc{and, since simplicial complexes
%  are a particular case of hypergraphs we study, we refer to the model as to the \emph{simplicial SIS model}.} 
%
  This paper analyzes a Susceptible-Infected-Susceptible (SIS) model of
  epidemic propagation over hypergraphs and, motivated by an important
    special case, we refer to the model as to the \emph{simplicial SIS
      model}.      
 Classically, the multi-group SIS model has assumed pairwise interactions
 of contagion across groups and thus has been vastly studied in the
 literature.  It is only recently that a renewed special attention has been
 drawn to the study of contagion dynamics over higher-order interactions
 and over more general graph structures, like simplexes.
 Previous work on mean-field approximation scalar models of the simplicial SIS
 model has indicated that a new dynamical behavior domain, compared to the
 classical SIS model, appears due to the newly introduced higher order
 interaction terms: both a disease-free equilibrium and an endemic 
 equilibrium co-exist and are both locally asymptotically stable.
 This paper formally establishes that bistability (as a new epidemiological
 behavior) also appears in the multi-group simplicial SIS model. We give sufficient
 conditions over the model's parameters for the appearance of this and the
 other behavioral domains present in the classical multi-group SIS
 model. We additionally provide an algorithm to compute the value of the
 endemic equilibrium and report numerical analysis of the transition from
 the disease-free domain to the bistable domain.
\end{abstract}

\textbf{Keywords:}
epidemics, SIS models, compartmental models, network systems, network processes

\section{Introduction}

%\subsection*{An introduction to mathematical epidemiology}
The study and modeling of the spread of infectious diseases in contact
networks has a long history of development and is of major relevance
today. A first class of models are called \emph{scalar models}, where a
single population is studied. The epidemiological evolution in this single
population is represented by the dynamics of one or more scalar values
that represent a specific proportion of the population (e.g., a scalar
value can represent the proportion of currently infected people).
%. For example, a scalar value can represent the proportion
%that is currently infected by the disease. 
We refer to the
work~\cite{HWH:00} for a survey on these type of models. The basic
assumption on these models is that the whole population is homogeneous,
i.e., every individual in the population has the same probability of
interaction.  However, in view of this shortcoming, \emph{network} or
\emph{multi-group models} were introduced, in which several homogeneous
populations, also called groups, interact with each other 
according to an underlying 
%, and each
%population can have an arbitrary way of interacting with the rest of the
%populations over a 
contact network. Thus, these models 
can capture different kinds of heterogeneity, e.g., age structures, spatial
diversity and social behavior. 
% \fbmargin{rewrite the sentence right before this
%  margin. explain that multi-group models capture heterogeneity due to age
%  structure, spatial diversity and social behavior. Also, more than
%  ``interaction network'' it might be more standard to, and I would say,
%  ``contact network'' } 
The epidemics is then modulated by the different 
model parameters (e.g., the recovery rate from a disease) that each
population may have, and the connectivity of the underlying network and the
strength of its connections. 
%In other words, 
Thus, 
the propagation of the
epidemic %is a 
is now a 
network process.
% where each node of the graph is a population
%and the state of each node is affected by the state of its neighbors.

Multi-group epidemic models have a longstanding history that can be traced
back to the seminal works~\cite{HWH:78,AL-JAY:76}. A recent interpretation
as an approximation of Markov-chain models is given
by~\cite{FDS-CS-PvM:13}.  Degree-based versions of the model have been
analyzed through statistical mechanics in the physics
community~\cite{RPS-AV:01,AdO:08}.  Stability analyses by the controls
community include~\cite{AF-AI-GS-JJT:07,AK-TB-BG:16}.  Much recent work by
the control community has focused on (i) control of epidemic dynamics in
multi-group models, e.g.~\cite{NJW-CN-GJP:20,CN-VMP-GJP:16}, (ii)
extensions of epidemics on time-varying graphs across populations,
e.g.~\cite{MO-VM-NM:20,PEP-CLB-AN:17}, (iii) extensions to
multi-competitive viruses on multi-group models,
e.g.~\cite{PEP-JL-CLB-AN-TB:17}, and (iv) game-theoretical analysis on
multi-group models, e.g.~\cite{ARH-SS:19,KP-CE-JSW-JSS:17}.  Finally, we
mention the recent surveys~\cite{WM-SM-SZ-FB:16f,CN-VMP-GJP:16}.

In this work, we focus on the \emph{Susceptible-Infected-Susceptible} (SIS)
model for the propagation of infectious diseases in the context of
  social contagion. SIS models are applicable to diseases that have
the possibility of a repeated reinfection, i.e., those in which a person
does not develop permanent immunity after recovery~\cite{MM:15}. Some
examples of these diseases are ghonorrea, chlamydya, the common cold,
etc. In the scalar SIS model, the population can be divided in two
fractions: those who are infected and those who are susceptible to become
infected~\cite{HWH:00}.
%%
%% Since all fractions of the
%% population adds to one, only the evolution of the infected fraction needs
%% to be analyzed. Note that it is assumed that a person who recovers becomes
%% immediately susceptible again. 
%% population is completely described by its proportion of infected people.
%%
In the multi-group SIS model, each node of the graph can be interpreted as
either (i) an individual and its associated scalar variable as the
infection probability, or (ii) as a homogeneous group of individuals and the
associated scalar variable is the fraction of infected individuals. The type of
interaction among the individuals or populations %that can spread the disease 
defines the social contagion mechanism.

In SIS models, it is important to investigate conditions under which the
system converges or not to a \emph{disease-free} equilibrium, i.e., a state
in which all populations become healthy/uninfected (or equivalently, the
probability of any person of being infected becomes zero) or to an
\emph{endemic} equilibrium, i.e., a state in which all populations maintain
a (nonzero) fraction of its members always infected (or equivalently, the
probability of any person of being infected remains nonzero).

%\subsection*{Nonlinear incidence and simplicial contagion models}
\paragraph*{Nonlinear incidence and simplicial contagion models}

%A particular observation is that the 
The vast majority of the literature on
multi-group SIS models (and other epidemic models in general) considers
only that the interaction between populations (or individuals) is pairwise,
i.e., the social contagion occurs only through the edges that connect
them. Equivalently, in the context of scalar models, this prevalent
assumption is understood as the \emph{incidence rate}, i.e., the rate of
new infections, being bilinear in the proportions of infected and
susceptible people (because the rate is simply the product of both
  proportions).  The idea of considering %classes of 
  nonlinear incidence
rates in epidemic scalar models %is not recent, 
can be traced back to the late eighties~\cite{WML-HWH-SAL:87}.
%
%as can be found in the
%seminal work~\cite{WML-HWH-SAL:87} from the late eighties.

From a network-science viewpoint, the recent work by Iacopini et
al.~\cite{II-GP-AB-VL:19} elaborates on the idea of nonlinear incidence
models and considers higher-orders of interaction in the social contagion
of a disease. Since its publication, the work~\cite{II-GP-AB-VL:19} has
received considerable interest and much attention is now focused on
higher-order interactions and simplicial models. We now elaborate on these
ideas. Consider three populations or individuals $i,j,k$. If the pairwise
interactions $\{i,j\}$ or $\{i,k\}$ occur, then there is a certain
susceptibility of $i$ to be infected. However, if the whole group
$\{i,j,k\}$ interact together, then the likelihood of infection for $i$ may
increase since now the simultaneous interaction effect by $j$ and $k$ are
aggregated to the single pairwise interactions we previously
described. 
We can consider $\{i,j,k\}$ as a hyperedge. 
An important class of hypergraphs is a \emph{simplicial complex}, which is a 
hypergraph that contains all nonempty subsets of hyperedges
%$, also called \emph{simplices} (in singular, \emph{simplex}) 
as hyperedges.
%
%Indeed, $\{i,j\}$ and $\{i,k\}$ represent edges of an undirected
%contact graph and each one forms a $1$\emph{-simplex}; whereas $\{i,j,k\}$
%represents a hyperedge of the same contact graph and forms a
%$2$\emph{-simplex}. When a $2$-simplex interaction is possible in the
%contact graph, we regard it as representing a set of nodes that form a
%triad.  \pc{In general, a hypergraph composed by simplexes is called a
%  \emph{simplicial complex}, and it has the fundamental property that,
%  given a hyperedge, any non-empty subset of vertices in this hyperedge is
%  also a hyperedge of the simplicial complex. 
In a simplicial complex, a
  hyperedge with $d$ vertices forms a $(d-1)$-simplex, and the simplicial
  complex is said to be of dimension $d-1$ if $d$ is the largest number of
  vertices in any of its simplices (i.e, in its largest
  simplex). 
As an example, if $\{i,j,k\}$ is a $2$-simplex, then $\{i,j\}$, $\{j,k\}$, $\{i,k\}$, $\{i\}$, $\{j\}$, $\{k\}$ are simplices. Thus, a simplex $\{i,j,k\}$ can be understood as a set of nodes that form a triad. Note that if $\{i,j\}$, $\{j,k\}$ and $\{i,k\}$ belong to a simplicial complex, then $\{i,j,k\}$ is not necessarily a simplex. We refer to~\cite{AH:02} for a general and
    extensive treatment of simplicial complexes. 
Starting from these
ideas, the work~\cite{II-GP-AB-VL:19} proposes a new SIS model that
considers the evolution of the epidemic with an underlying simplicial
complex %whose highest order of interaction are 2-simplices
of dimension $2$, as opposed to
the classical SIS model that has up to 1-simplices.
However,~\cite{II-GP-AB-VL:19} performs the analysis of a mean-field
approximation %of the multi-group model, 
which becomes a scalar model. A
different derivation of the SIS model over simplicial complexes was
recently introduced in~\cite{JTM-SG-AA:20} from a Markov-chain and mean-field
approximation perspective up to $2$-simplices. Also recently, Jhun et
al.~\cite{BJ-MJ-BK:19} consider the multi-group SIS model and restrict
their analysis to a mean-field approximation of the model for a special
class of simplicial complexes, namely, an infinite hypergraph composed
  of hyperedges of the same size corresponding to simplicial complexes of
  the same dimension.
%All
%the aforementioned works consider up to $2$-simplices.

As discussed by~\cite{II-GP-AB-VL:19}, the adoption of simplicial
interactions in modeling contagion bears some similarities with the
modeling ideas behind linear threshold models by Granovetter~\cite{MG:78}
in sociology, where individuals adopt innovations only when a certain fraction of
their contacts have earlier adopted that innovation. Moreover, simplicial
and higher-order graphical models may be more accurate than simpler
pairwise contagion models to describe transmission events during large
gatherings or other social aggregation
phenomena~\cite{KFK-LS-DCS-MM:13,GFdA-GP-YM:20}. Overall, the study of simplicial and
higher-order interactions is well motivated by the observation that these
structures are ubiquitous and play an important role in real-world social
networks~\cite{PB-ACH-MJ:04,HH-JT-LL-JL-XU:15,ARB-RA-MTS-AJ-JK:18,MTS-ARB-PH-GL-AJ:20}. We refer to the excellent recent survey~\cite{FBa-GC-II-VL-ML-AP-JGY-GP:20} for an overview of the emerging field of networks with higher-order interactions.

%\subsection*{Problem statement}
\paragraph*{Problem statement}

We now state what is, to the best of our knowledge, an outstanding open
problem. Namely, no work in the current literature establishes a formal
  analysis of the dynamical behavior of a general multi-group SIS model
with higher-order interaction terms over general classes of
  (hyper)graphs. An example of such model could be an SIS model with
  interactions described by a finite simplicial complex. Our paper
responds to this need.  The analysis of such a model may help better
understand the effect of higher-order interaction terms on the dynamics of
social contagion in societies with large gatherings or other social
aggregation phenomena.

%\subsection*{Contributions}
\paragraph*{Contributions}

As main contribution of this paper, we consider the simplicial SIS model
and analyze its dynamical behavior. In particular, we identify conditions
on the parameters of the model that allow us to conclude the existence and
asymptotic behavior of a disease-free and/or endemic equilibrium. 
%In
%particular, we 
We prove that the model, according to different regimes in its
parameter space, can have 
%\pc{has} 
%
%
its dynamic behavior classified in three epidemic domains:
(i) \emph{disease-free domain}, (ii) \emph{bistable domain}, and  (iii)
\emph{endemic domain:} (see Definition~\ref{def:epidemic_domains}).
%
%
%its dynamic behavior classified in three domains:
%(i) \emph{disease-free domain:} where convergence to a disease-free
%equilibrium is guaranteed as well as the nonexistence of endemic
%equilibria; (ii) \emph{bistable domain:} where, depending on the initial 
%%conditions (i.e., the initial 
%amount of infection across populations, %),
%convergence to a disease-free or endemic equilibria may occur; and (iii)
%\emph{endemic domain:} where the disease-free equilibrium is unstable and a
%unique endemic equilibrium is asymptotically stable. 
%
%
While the
%classification 
conditions
given in our main theorem (Theorem~\ref{th:simplicial_main}) do not exhaust all possible
values of the system parameters, we include numerical results that
illustrate the tightness of our derived conditions. Despite this gap, our
sufficient conditions rigorously establish the crucial qualitative behavior
of transition between the disease-free domain and the bistable domain. To
the best of our knowledge, this transition was formally proved only for the
scalar version of the simplicial SIS model %introduced
in~\cite{II-GP-AB-VL:19}.

As second contribution, we propose an iterative algorithm which 
%is
%guaranteed to monotonically converge to and, therefore, compute an endemic
%state 
computes an endemic state through monotone convergence 
when the system is in either the bistable or the endemic domain.  
%%%according to the presented sufficient conditions.  
We remark that obtaining
a closed form expression for an endemic equilibrium appears to be
intractable and, indeed, for the classical multi-group SIS model the
best-known result is a monotonic 
convergent 
iteration, % with convergence guarantees
see~\cite[Theorem~4.3]{WM-SM-SZ-FB:16f}.

As third contribution, we present a general multi-group SIS model with
higher-order interactions, generalizing the %\pc{scalar} 
two dimensional 
simplicial SIS model. Analyzing
this generalized model, we prove that the existence of the bistable domain
is a general phenomenon resulting from higher-order interactions.  While
the treatment becomes more cumbersome, we show that our analysis techniques
are still applicable.

As minor contributions, we provide numerical examples that illustrate the
behavioral domains of the simplicial SIS model and present two interesting
conjectures about the features of the epidemic diagram. Moreover, we
present a self-contained formal review of %previously 
known results for the
scalar version of the simplicial SIS model to facilitate its comparison with 
%; this review facilitates the
%comparison between the scalar and 
the multi-group models.

We conclude by mentioning that, to prove our results, we %make 
use %of 
the
theory of Metzler matrices and positive systems, fixed-point analysis of
continuous mappings, and exponential convergence with matrix measures and
Lyapunov theory. %Indeed, 
We review a little known result for exponential
convergence combining the theory of matrix measures 
%for positive
%systems 
with the theory of solution estimates (Coppel's inequality) for
systems with continuously differentiable vector fields. We remark that
previous works that analyze the classical multi-group SIS model have used specialized cases of
this %exponential convergence 
result, e.g., see~\cite[Theorem~2.7]{AF-AI-GS-JJT:07}.

\begin{figure*}[ht]
  \centering
  \includegraphics[width=.99\linewidth]{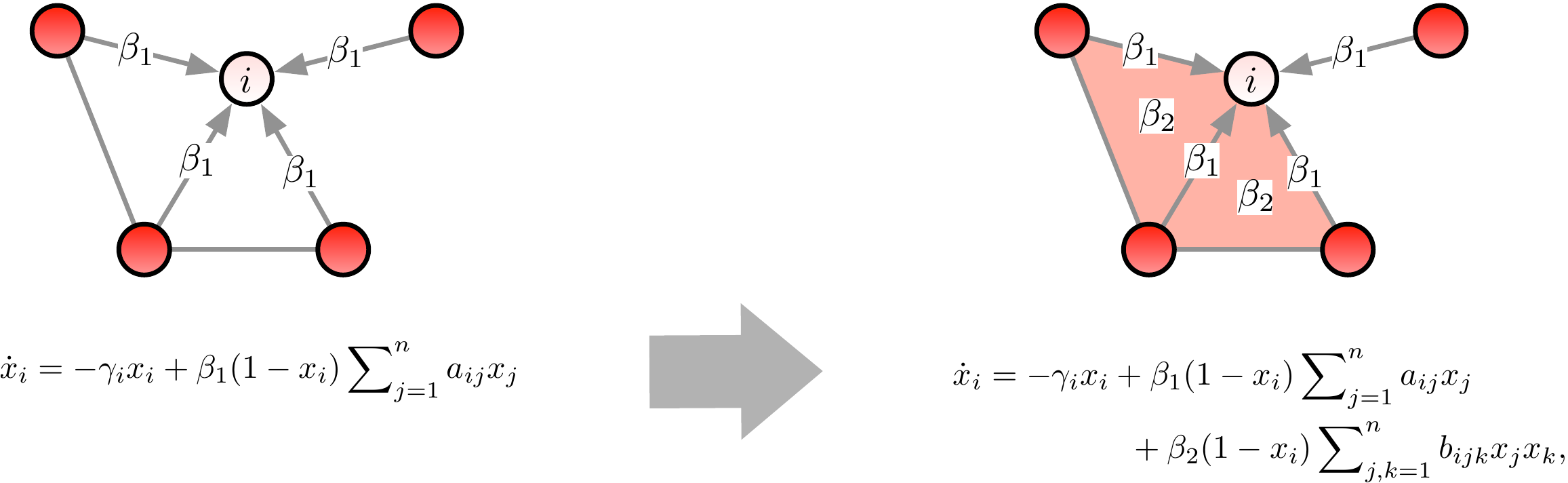}
  \caption{Example: 
%  From pairwise to simplicial interactions in the multi-group SIS
%    epidemic model: 
    From pairwise to simplicial interactions in the multi-group SIS
    epidemic model. The
    left figure corresponds to the classical version
    and the right one to the simplicial SIS
    model, where hyperedges of three elements provide the higher-order interaction.}\label{fig:comparison-models}
\end{figure*}

\paragraph*{Paper organization} 
Section~\ref{sec:prelim} is the preliminaries and notation. 
Section~\ref{sec:exp-mmeasures} presents a convergence result using matrix measures.
Section~\ref{sec:simplicial} introduces the simplicial SIS model 
and  
Section~\ref{sec:analysis} its %presents the
dynamical analysis. Section~\ref{sec:numeric} presents 
numerical examples, and Section~\ref{sec:concl} is the
conclusion.

\section{Preliminaries and notation}
\label{sec:prelim}

\subsection{General notation}
Given $A\in\R^{n\times{n}}$, let $\rho(A)$ denote its spectral radius and
let $A\geq 0$ mean that all its elements are non-negative. A
  nonnegative matrix $A$ is irreducible if for any $i,j\in\until{n}$, there
  exists a $k=k(i,j)\leq n-1$ such that the $ij$ entry of $A^k$ is
  positive. Alternatively, if $A\geq 0$ is regarded as a weighted adjacency
  matrix of some directed graph $\mathcal{G}$, $A$ is irreducible if and
  only if the graph $\mathcal{G}$ is strongly connected.
If $A\geq 0$ is
irreducible, then, by the Perron-Frobenius theorem~\cite[Theorem~8.4.4.]{RAH-CRJ:12}, its eigenvalue
with largest magnitude $\subscr{\lambda}{max}(A)$ is real, simple, and
equal to $\rho(A)>0$. This eigenvalue is called the
Perron-Frobenius or dominant eigenvalue and has associated left and
right Perron-Frobenius or dominant eigenvectors with positive entries
(normalized to have unit sum, by convention).

Let $\norm{\cdot}$ denote an arbitrary norm,
$\norm{\cdot}_p$ denote the $\ell_p$-norm, and
$\norm{\cdot}_{p,Q}:=\norm{Q\cdot}_p$ with $Q$ being a positive definite
matrix denote a weighted $\ell_p$-norm.  When the argument of a norm is a
matrix, we refer to its respective induced matrix norm. Given two vectors
$x,y\in\R^n$, we denote $x\ll y$ when $x_i<y_i$ for every $i$; $x\leq y$ when
$x_i\leq y_i$ for every $i$; and $x<y$ when $x\leq y$ and $x\neq y$.

Let $I_n$ be the $n\times n$ identity matrix, $\vect{1}_n,\vect{0}_n\in\R^n$
be the all-ones and all-zeros vector with $n$ entries
respectively. Let $\vect{0}_{n\times{n}}$ be the $n\times n$ zero matrix. 
Let
$\diag(X_1,\dots,X_N)\in\R^{\sum^N_{i=1}n_i\times\sum^N_{i=1}n_i}$
represent a block-diagonal matrix whose elements are the matrices
$X_1\in\R^{n_1\times n_1},\dots,X_N\in\R^{n_N\times n_N}$. Given a 
vector $x\in\R^n$, $\diag(x)=\diag(x_1,\cdots,x_n)$. 
Let $\R_{\geq
  0}$ be the set of non-negative real numbers.  Given $x_i\in\R^{k_i}$, for
$i\in\until{N}$, we let $(x_1,\dots,x_N)=\begin{bmatrix}x_1^\top
&\dots&x_N^\top\end{bmatrix}$.

Finally, we recall a classic monotonicity property. If $A$ and $A'$ are
square matrices of the same dimension,
\begin{equation} \label{eq:monotonicity}
  0 \leq A \leq A' \quad \implies \quad \rho(A)\leq\rho(A'),
\end{equation}
where $ A \leq A'$ means $0\leq A'-A$.
\subsection{Matrix measures}
Given $A\in\R^{n\times{n}}$ and norm $\norm{\cdot}$ on
$\real^n$, its associated %the associate 
matrix measure is % defined by
$\mu(A) = \lim_{h\to 0^{+}}\frac{\|I_n+hA\|-1}{h}$~\cite{MV:02,SC:19}.
Given $x\in\real^n$ and $\xi\gg\vect{0}_n$, the 
weighted $\ell_\infty$-norm is 
$\norm{x}_{\infty,\diag(\xi)}=\norm{\diag(\xi)x}_{\infty}$ and its associated
matrix measure is
%
%Given $x\in\real^n$ and $\xi\gg\vect{0}_n$, the %weighted $\ell_1$-norm and
%weighted $\ell_\infty$-norm is %are, respectively, 
%%\pc{$\norm{x}_{1,\diag(\xi)}= \norm{\diag(\xi)x}_{1}$ and 
%$\norm{x}_{\infty,\diag(\xi)}=\norm{\diag(\xi)x}_{\infty}$. %}
%%\begin{align*}
%%  \norm{x}_{1,\diag(\xi)} &= \norm{\diag(\xi)x}_{1}, \\
%%  \norm{x}_{\infty,\diag(\xi)} &=\norm{\diag(\xi)x}_{\infty}.
%%\end{align*}
%Given $A\in\R^{n\times{n}}$ and norm $\norm{\cdot}$ on
%$\real^n$, \pc{its associated %the associate 
%matrix measure~\cite{MV:02} is% defined by
%$\mu(A) = \lim_{h\to 0^{+}}\frac{\|I_n+hA\|-1}{h}$.}
%%
%%\begin{align*}
%%\mu(A) = \lim_{h\to 0^{+}}\frac{\|I_n+hA\|-1}{h}.
%%\end{align*}
%The matrix measure associated to \pc{$\norm{\cdot}_{\infty,\diag(\xi)}$}
%%the weighted %$\ell_1$-norm and weighted
%%$\ell_\infty$-norm 
%is %are, respectively,
\begin{align*}
%  \mu_{1,\diag(\xi)}(A) &= \!\max_{j\in\until{n}}\! \Big( a_{jj} + \frac{1}{\xi_j}
%  \sum\nolimits_{i=1,i\neq j}^n |a_{ij}|\xi_i\Big),
%  \\
  \mu_{\infty,\diag(\xi)}(A) &= \!\max_{i\in\until{n}} \!\Big( a_{ii} + \xi_i
  \sum\nolimits_{j=1,j\neq i}^n |a_{ij}| /\xi_j\Big).
\end{align*}
Given a Metzler matrix $M\in\real^{n\times{n}}$ and a scalar $b$, 
\begin{equation}
\label{eq:1-aux}
\begin{aligned} 
%  \xi^\top M  \leq b \xi^\top &\enspace\iff\enspace 
%  \mu_{1,\diag(\xi)}(M) \leq b ,
%  \\
  M \xi \leq b \xi &\enspace\iff\enspace 
  \mu_{\infty,\diag(\xi)^{-1}}(M) \leq b .
\end{aligned}
\end{equation}

\section{Exponential convergence and matrix measures}
\label{sec:exp-mmeasures}
The following result combines the matrix measure results shown above with
the Coppel's inequality as stated in~\cite[Theorem~22, (Chapter~2,
  page~52)]{MV:02}. To the best of our knowledge, this connection and the
result in~\cite{MV:02} have not been explicitly exploited before. This result
will be useful for the paper's main theorem.

\begin{theorem}[Exponential convergence from Coppel's inequality]
  \label{theorem:append}
  Consider a smooth dynamical system $\dot{x}=f(x)$ with a convex compact
  invariant set $\mathcal{X}$ and an equilibrium point
  $x^*\in\mathcal{X}$. Write the system as
  \begin{equation}
    \label{eq:f-D}
    \dot{x}=\mathcal{D}(x,x^*)(x-x^*).
  \end{equation}
  where $\mathcal{D}(x,x^*)\in\R^{n\times n}$ is a function of $x$ and $x^*$.
  Let $\norm{\cdot}$ be a norm and $\mu$ be its associated matrix measure
  $\mu$.  If $\mu(\mathcal{D}(x,x^*))\leq-c$ for any $x\in\mathcal{X}$ and some $c>0$, then $x^*$ is
  the unique exponentially stable equilibrium point in $\mathcal{X}$ and
  exponential convergence %is attained at 
  with at least rate $c$. Moreover,
  $V(x)=\norm{x-x^*}$ is a global Lyapunov function for $x^*$ in
  $\mathcal{X}$.
\end{theorem}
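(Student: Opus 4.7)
The plan is to reduce the nonlinear convergence statement to a linear time-varying estimate via Coppel's inequality applied along trajectories. First, fix an initial condition $x(0)\in\mathcal{X}$ and let $x(t)$ be the resulting solution, which stays in $\mathcal{X}$ by invariance. Define $y(t):=x(t)-x^*$ and $A(t):=\mathcal{D}(x(t),x^*)$; by construction $y$ satisfies the linear time-varying equation $\dot y(t)=A(t)y(t)$ with $y(0)=x(0)-x^*$. Because $f$ is smooth and $\mathcal{X}$ is compact, $A(t)$ is continuous in $t$ (this is the only regularity fact one needs beyond the assumed smoothness — if one wishes to be explicit about the factorization, one can take $\mathcal{D}(x,x^*)=\int_0^1 Df(x^*+s(x-x^*))\,ds$, which is continuous in $x$).

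Next, I would invoke Coppel's inequality in the form stated in~\cite[Theorem~22]{MV:02}: for any solution of $\dot y=A(t)y$,
\begin{equation*}
  \norm{y(t)}\leq \norm{y(0)}\exp\!\Big(\int_0^t \mu(A(s))\,ds\Big).
\end{equation*}
Using the hypothesis $\mu(\mathcal{D}(x,x^*))\leq -c$ for all $x\in\mathcal{X}$, the integrand is pointwise at most $-c$, hence
\begin{equation*}
  \norm{x(t)-x^*}\leq \norm{x(0)-x^*}e^{-ct}\qquad\text{for all }t\geq 0.
\end{equation*}
This is exactly the desired exponential convergence to $x^*$ with rate $c$, valid globally on $\mathcal{X}$.

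Uniqueness of the equilibrium follows immediately: if $x^{**}\in\mathcal{X}$ were another equilibrium, then choosing $x(0)=x^{**}$ gives $x(t)\equiv x^{**}$, so the estimate forces $\norm{x^{**}-x^*}\leq\norm{x^{**}-x^*}e^{-ct}$ for all $t\geq 0$, which implies $x^{**}=x^*$. For the Lyapunov function claim, take $V(x)=\norm{x-x^*}$; the exponential bound above, applied along an arbitrary trajectory, yields $V(x(t))\leq V(x(0))e^{-ct}$, from which one reads off $D^+V(x(t))\leq -cV(x(t))$, so $V$ is a global Lyapunov function on $\mathcal{X}$ certifying exponential stability with rate $c$.

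The only subtle point, and what I would consider the main (mild) obstacle, is the logical status of applying Coppel's inequality in this setting: Coppel's estimate is a statement about linear time-varying systems, whereas here the ``coefficient matrix'' $A(t)=\mathcal{D}(x(t),x^*)$ is defined \emph{a posteriori} from the nonlinear trajectory. This is legitimate because $y(t)$ does solve the linear equation $\dot y=A(t)y$ once $x(t)$ is fixed, but one must be careful that $A(\cdot)$ is a measurable (in fact continuous) function of $t$ so that Coppel's theorem applies — this is what the smoothness of $f$ together with the compactness of $\mathcal{X}$ provides. With that observation made, the remainder of the argument is a direct reading-off of the estimate.
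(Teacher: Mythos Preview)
Your proposal is correct and follows essentially the same approach as the paper: freeze the trajectory, view $y=x-x^*$ as solving the linear time-varying system $\dot y=A(t)y$ with $A(t)=\mathcal{D}(x(t),x^*)$, and bound $\|y(t)\|$ via the matrix measure. The only cosmetic difference is packaging: the paper computes the Dini derivative $\frac{d^+}{dt}\|x-x^*\|\leq\mu(\mathcal{D}(x,x^*))\|x-x^*\|\leq -c\|x-x^*\|$ directly and then applies Gr\"onwall, whereas you invoke the integral form of Coppel's inequality as a black box; the paper's route gives the Lyapunov inequality $D^+V\leq -cV$ first and the exponential bound second, while you obtain them in the opposite order.
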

\begin{proof}
  First, %we remark that 
  it is always possible~\cite[Lemma~17, Chapter~2,
    page~52]{MV:02} to write $f$ in the form~\eqref{eq:f-D} using the
  fundamental theorem of calculus and the convexity of $\mathcal{X}$.
  Second, as argued in~\cite[Chapter~1, page~3]{WAC:1965}, since the
  right-hand derivative of $x(t)-x^*$ is $\dot{x}(t)$ at any $t\geq 0$, the right-hand derivative $\frac{d^+}{dt}\norm{x-x^*}$ exists and moreover
  \begin{align*}
    \frac{d^+}{dt}\norm{x-x^*}
    &=\lim_{h\to 0^+}\frac{\norm{x-x^*+h\dot{x}}-\norm{x-x^*}}{h}\\
    &=\lim_{h\to 0^+}\frac{\norm{I_n+h\mathcal{D}(x,x^*)}-1}{h}\norm{x-x^*}\\
    &\leq\mu(\mathcal{D}(x,x^*))\norm{x-x^*}\leq -c\norm{x-x^*},
  \end{align*}  
 where the second inequality follows from Coppel's inequality as in~\cite[Theorem~3,
    Chapter~3]{WAC:1965} and in~\cite[Theorem~22, Chapter~2,
    page~52]{MV:02}, and the third inequality follows from  
  %  where we used 
  the negative matrix measure assumption.  Therefore,
  applying Gr\"{o}nwall's inequality, any trajectory $x(t)$ starting in
  $\mathcal{X}$ satisfies $\norm{x(t)-x^*}\leq
  e^{-ct}\norm{x(0)-x^*}$. Moreover, $x^*$ is the unique globally
  exponentially stable equilibrium in $\mathcal{X}$.
  
  Finally, observe that $V(x)=\norm{x-x^*}$, $x\in\mathcal{X}$, is a
  Lyapunov function with respect to $x^*$ since (i) it is globally proper,
  i.e., for each $\ell>0$, the set $\setdef{x\in\mathcal{X}}{V(x)\leq\ell}$
  is compact (since $\mathcal{X}$ is compact), (ii) it is positive definite
  on $\mathcal{X}$, (iii) strictly decreasing for any $x\neq x^*$ on
  $\mathcal{X}$.  This finishes the proof.
\end{proof}
  
\section{The Simplicial SIS model}
\label{sec:simplicial}
We study the following multi-group deterministic model, which can be regarded as 
a mean-field approximation of a more realistic stochastic model --- mean-field models are used because their dynamics are deterministic (described by ODEs) and their states describe multiple large populations of individuals.

\begin{figure}[ht]
  \centering
  \includegraphics[width=\linewidth]{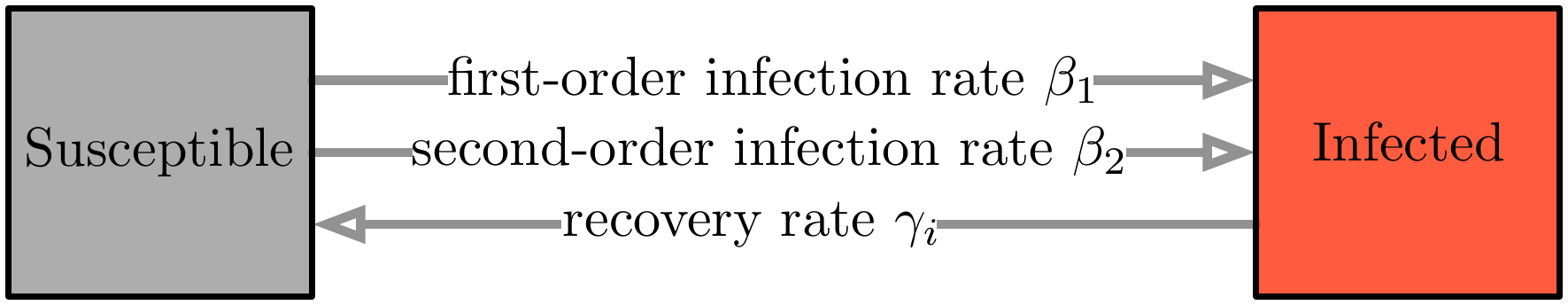}
  \caption{Simplicial SIS as a compartmental model}\label{fig:compartmental-fig}
\end{figure}

\begin{definition}[The simplicial SIS model]
\label{def:simplicial_model}
Assume $x\in[0,1]^n$, and let $\beta_1,\beta_2>0$ and $\gamma_i>0$,
$i\in\until{n}$. Then, the \emph{simplicial SIS model} is, for any
$i\in\until{n}$,
\begin{multline}
  \label{model_i}
  \dot{x}_i=-\gamma_i x_i+\beta_1(1-x_i)\sum\nolimits_{j=1}^n a_{ij}x_j\\
  +\beta_2(1-x_i)\sum\nolimits_{j,k=1}^n b_{ijk}x_jx_k, 
  %&=-\Gamma x_i+\beta_1(1-x_i)\sum_{j=1}^n a_{ij}x_j+\frac{\beta_2}{2}(1-x_i)x^\top B_i x
\end{multline}
or, in its matrix form, with $\Gamma=\diag(\gamma_1,\cdots,\gamma_n)$, the model is 
\begin{equation}
\label{model_matrix}
\begin{aligned}
\dot{x}&=-\Gamma x+\beta_1(I_n-\diag(x))Ax\\
&\quad+\beta_2(I_n-\diag(x))(x^\top B_1x,\cdots,x^\top B_nx)^\top
\end{aligned}
\end{equation}
where $B_i=\begin{bmatrix}
b_{i11}&\cdots&b_{i1n}\\
\vdots&&\vdots\\
b_{in1}&\cdots&b_{inn}
\end{bmatrix}$, $i\in\until{n}$, and $A=(a_{ij})$ are arbitrary nonnegative matrices.
\end{definition}

We now provide some remarks about this definition.

\begin{remark}[Interpretation of Definition~\ref{def:simplicial_model}]
\label{rem:1}
(i) %The non-negative matrix 
Matrix $A\geq 0$ represents the pairwise contact rate
    between the agents, interpreted as a weighted adjacency matrix: $a_{ij}>0$ if agent $i$ (i.e., population or individual) is in contact with $j$, i.e., there is a directed edge from $i$ to $j$ with weight $a_{ij}$; and
    the magnitude of $a_{ij}$ indicates the contact frequency: the larger,
    the more positive effect on the infection spread. Now, for 
     matrix $B_i\geq 0$, $b_{ijk}>0$ if agent $i$ can have a
    simultaneous interaction with $j$ and $k$, and the magnitude of
    $b_{ijk}$ indicates the strength of the interaction; i.e., there is a hyperedge $(i,j,k)$ with weight $b_{ijk}$. Thus, the elements
    of $B_i$ indicate higher-order interaction effects that two agents
    jointly have over $i$. This is a key structural difference with the
    classical multi-group SIS model, see
    Figure~\ref{fig:comparison-models}.  Finally, $a_{ii}>0$ and
    $b_{iii}>0$ indicate different orders on the effect of actions taken by
    $i$ that increase the effect of the infection, and $b_{ijj}>0$
    indicates the higher-order effects of $j$'s actions over $i$.

(ii) If %we take 
our model %to be 
is 
strictly defined over a simplicial
  complex, then $A$ and $B_i$ should be symmetric and have %some 
  joint
  restrictions on %among 
  their elements. However, in our work, we do
    not restrict $A$ or $B_i$ to be symmetric and consider a more general
  mathematical model. We keep the term \emph{simplicial} in the title of
  the model since the special case of simplicial complexes inspired the
  more general model.
% 
%  \item\pc{Given any symmetric matrix $A$ and taking its graph representation $\mathcal{G}$ where $A$ is its (weighted) adjacency matrix, we can naturally construct a simplicial complex based on $\mathcal{G}$: for any complete subgraph of $\mathcal{G}$, we can consider all elements of this clique to form a simplex~\cite{FBa-GC-II-VL-ML-AP-JGY-GP:20}. This is known as the \emph{clique complex construction}. In this way, for any triad of nodes, we can construct elements of the higher-interaction matrices $B_i$, $i\in\until{n}$ (we can also do this for any complete subgraph in the extension of our model in Definition~\ref{def:simplicial_model_gen}). Therefore, any epidemic network that has been analyzed using the classic multi-group SIS model, can now be analyzed with our proposed model under the higher-order effects that result from the clique complex construction.}
 
(iii) The parameter $\gamma_{i}$ is the recovery rate of agent $i$ %(i.e,
%  population or individual) 
  from the infection.  Parameters $\beta_1$ and
  $\beta_2$ are the infection rates at which an agent may get infected due
  to pairwise or higher-order interactions
  respectively. Figure~\ref{fig:compartmental-fig} shows how these
  parameters modulate the proportion of infected and susceptible people
  inside a population, or equivalently, the changes in the probability for
  an individual to be infected or susceptible.
\end{remark}

We revisit the qualitative behavioral domains that a multi-group SIS model
with higher-order terms must display. 

\begin{definition}[Epidemic domains] 
\label{def:epidemic_domains}
Consider the simplicial SIS model with fixed parameters $\Gamma$, $A$ and
$B_i$ for all $i\in\until{n}$.  According to the values of parameters
$(\beta_1,\beta_2)$, the system is in the:
\begin{enumerate}
\item \emph{Disease-free domain:} the disease-free equilibrium $\vect{0}_n$
  is the unique equilibrium and globally stable.
\item \emph{Bistable domain:} the disease-free equilibrium is locally
  asymptotically stable and there exists an endemic equilibrium
  $x^*\gg\vect{0}_n$ which is also locally asymptotically stable.
\item \emph{Endemic domain:} the disease-free equilibrium is unstable and
  there exists a unique endemic equilibrium that is asymptotically stable
  in $[0,1]^n\setminus\{\vect{0}_n\}$.
\end{enumerate}
\end{definition}

The following %We introduce a 
theorem %that 
describes the behavior of the scalar
%counterpart 
version 
of the simplicial SIS model %introduced
in~\cite{II-GP-AB-VL:19}; although~\cite{II-GP-AB-VL:19} does not state its
results 
%in the form of a theorem, we present them as such for its
%comparison with our results for the multi-group model.
as a theorem, 
we present them as such for 
comparison purposes.

%%%%%%%%%%%%%%%%%%55
\begin{theorem}[Dynamics of the scalar model in~\cite{II-GP-AB-VL:19}]
  \label{th:scalar}
  Consider the \emph{scalar simplicial SIS model}
  \begin{equation}
    \label{eq:scalar_model}
    \dot{y}=-\gamma y + \beta_1(1-y)y + \beta_2(1-y)y^2
  \end{equation}
  with $y\in[0,1]$ and $\gamma,\beta_1,\beta_2>0$. 
  Then,
  the set $[0,1]$ is invariant and $0$ is an equilibrium point. 
  Define
  $v_c(\beta_2/\gamma)=2\sqrt{\frac{\beta_2}{\gamma}}-\frac{\beta_2}{\gamma}$
  and the two variables $\nu_\pm=\frac{1}{2}(1-\frac{\beta_1}{\beta_2})
  \pm\frac{1}{2\beta_2}\sqrt{(\beta_1-\beta_2)^2-4\beta_2(\gamma-\beta_1)}$. Moreover,
\begin{description}
\item[Disease-free domain:] If either $\frac{\beta_2}{\gamma}\leq 1$ and $\frac{\beta_1}{\gamma}\leq1$, or $\frac{\beta_2}{\gamma}> 1$ and $\frac{\beta_1}{\gamma}<v_c(\beta_2/\gamma)$, then 
\begin{enumerate}
\item\label{t-1} $0$ is the unique equilibrium point in $[0,1]$,
\item $0$ is globally asymptotically stable in $[0,1]$.
  \setcounter{saveenum}{\value{enumi}}
\end{enumerate}
\item[Bistable domain:] If $\frac{\beta_2}{\gamma}>1$ and $v_c(\beta_2/\gamma)<\frac{\beta_1}{\gamma}<1$, then $\nu_-,\nu_+\in(0,1]$ and
\begin{enumerate} \setcounter{enumi}{\value{saveenum}}
\item\label{t-2} $0$ is locally asymptotically stable in $[0,\nu_-)$,
\item $\nu_+$ is a locally asymptotically stable equilibrium in $(\nu_-,1]$, and
\item $\nu_-$ is an unstable equilibrium.
    \setcounter{saveenum}{\value{enumi}}
\end{enumerate}
\item[Endemic domain:] If $\frac{\beta_1}{\gamma}>1$, then 
\begin{enumerate} \setcounter{enumi}{\value{saveenum}}
\item\label{t-3} $0$ is unstable,
\item $\nu_+$ is the unique equilibrium in $(0,1]$ and is globally asymptotically stable in $(0,1]$.
%
% then any trajectory in $[0,1]^n\setminus\{\vect{0}_n\}$ %with initial condition different than the origin 
%converges to an equilibrium point $x^*\gg \vect{0}_n$. 
%%such equilibrium point $x^*$ is locally asymptotically stable.
%%globally asymptotically stable in $[0,1]\setminus\vect{0}_n$.
\end{enumerate}
\end{description}
\end{theorem}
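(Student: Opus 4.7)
The plan is to reduce everything to one-dimensional phase-line analysis. First I would factor $\dot{y} = y\,p(y)$, where $p(y) := (\beta_1-\gamma) + (\beta_2-\beta_1)y - \beta_2 y^2$ is a downward-opening parabola whose real roots, when they exist, are exactly $\nu_-$ and $\nu_+$. Invariance of $[0,1]$ then follows immediately from $\dot{y}|_{y=0}=0$ and $\dot{y}|_{y=1}=-\gamma<0$; $y=0$ is always an equilibrium; and linearization gives $\dot y \approx (\beta_1-\gamma)y$, so $0$ is locally asymptotically stable when $\beta_1<\gamma$ and unstable when $\beta_1>\gamma$. The knife-edge $\beta_1=\gamma$, which only enters the disease-free regime with $\beta_2\leq\gamma$, is handled directly from $p(y)=y[(\beta_2-\gamma)-\beta_2 y]<0$ on $(0,1]$.

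The key algebraic step is to rewrite the discriminant of $p$: with $r := \beta_1/\gamma$ and $s := \beta_2/\gamma$, a one-line expansion yields
\begin{equation*}
(\beta_1-\beta_2)^2 - 4\beta_2(\gamma - \beta_1) \;=\; \gamma^2\bigl((r+s)^2 - 4s\bigr),
\end{equation*}
so real roots of $p$ exist iff $r \geq 2\sqrt{s}-s = v_c(s)$. I would then read off the root sum $(s-r)/s$ and root product $(1-r)/s$ from Vieta, and note the universal fact $p(1)=-\gamma<0$ (equivalently $q(1)=1>0$ for the companion upward-opening $q(y):=-p(y)/\gamma = sy^2-(s-r)y+(1-r)$), which forces any real roots to lie on one side of $y=1$. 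Together these three observations pin down the existence, signs, and positions of $\nu_\pm$ relative to $[0,1]$ in each parameter regime.

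The case split then becomes routine bookkeeping. In the disease-free domain either the discriminant is negative, or one is in the sub-regime $s\leq 1$, $v_c(s)\leq r\leq 1$, where the elementary bound $s\leq v_c(s)$ for $s\in[0,1]$ (equivalent to $\sqrt{s}\leq 1$) makes the root sum non-positive while the product is non-negative, forcing both real roots to be non-positive; either way $0$ is the only equilibrium in $[0,1]$, $\dot{y}<0$ on $(0,1]$, and $0$ is globally asymptotically stable. In the bistable domain ($s>1$, $v_c(s)<r<1$), both sum and product are positive and the sum is strictly less than $1$, placing $0<\nu_-<\nu_+<1$; the sign pattern of $y\,p(y)$ on the three sub-intervals $(0,\nu_-)$, $(\nu_-,\nu_+)$, $(\nu_+,1]$ then yields the three stated local stability claims via standard scalar phase-line arguments. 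In the endemic domain ($r>1$), the negative product forces $\nu_-<0<\nu_+$, $q(1)>0$ pins $\nu_+<1$, and the two-interval sign analysis makes $\nu_+$ asymptotically stable on $(0,1]$.

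The main obstacle is purely combinatorial -- ensuring no sub-regime of $(r,s)$ is mishandled -- and the three facts (i) discriminant $=\gamma^2((r+s)^2-4s)$, (ii) $q(1)=1$, and (iii) $s\leq v_c(s)$ on $[0,1]$ are what collapse each sub-case to a trivial sign chase, so no Lyapunov construction or higher-dimensional stability theory is needed beyond 1D monotonicity.
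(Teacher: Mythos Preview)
Your proof is correct. The paper itself does not supply a proof of this theorem: it is stated without proof as a self-contained review of the scalar results from Iacopini et al., presented purely for comparison with the multi-group model that follows. Your elementary phase-line argument---factoring $\dot y = y\,p(y)$, rewriting the discriminant as $\gamma^2\bigl((r+s)^2-4s\bigr)$, and using Vieta's formulas together with $p(1)=-\gamma<0$ to locate $\nu_\pm$ relative to $[0,1]$---is the natural way to establish the result rigorously, and the case bookkeeping (in particular the observation $s\leq v_c(s)$ on $[0,1]$ to force both real roots non-positive in the $s\leq1$, $r\leq1$ sub-regime) is handled cleanly.
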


\begin{figure}[ht]
  \centering
  \includegraphics[width=.8\linewidth]{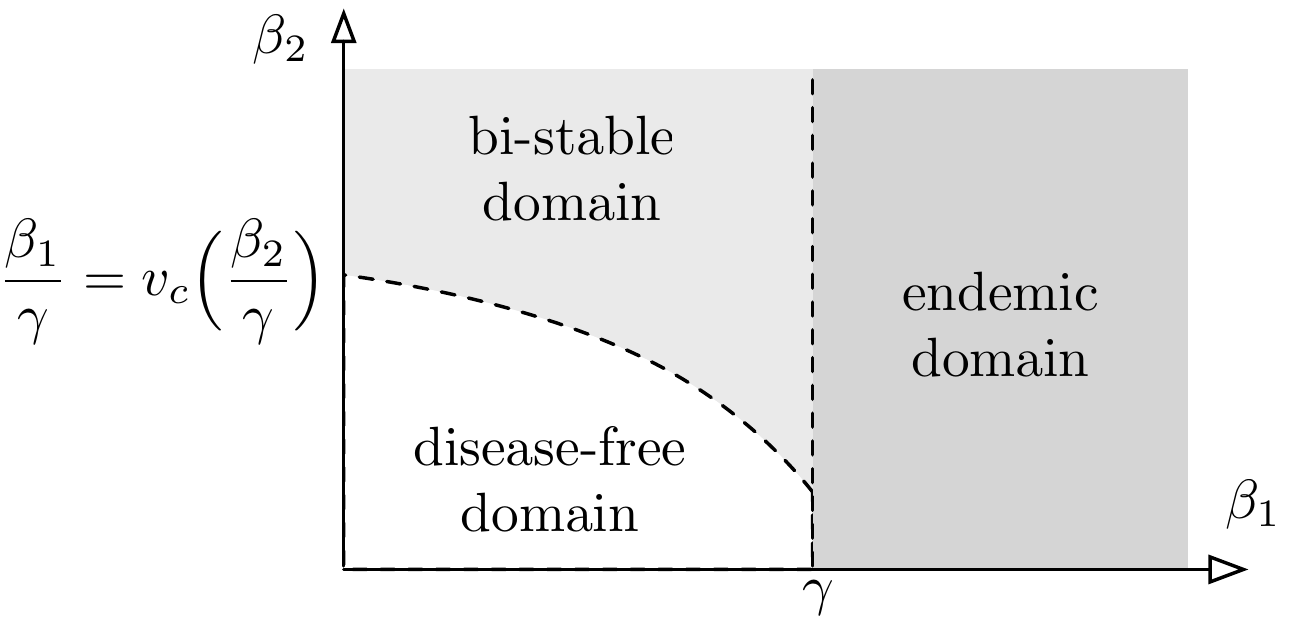}
  \caption{Epidemic diagram for the scalar simplicial SIS model (see
    Theorem~\ref{th:scalar}).}
\end{figure}

Notice the polynomial resemblance of the scalar model
in~\eqref{eq:scalar_model} and our multi-group simplicial model
in~\eqref{model_matrix}.

\section{Analysis of the model}
\label{sec:analysis}
First, we establish properties of the model independently from their parameter values.

\begin{lemma}[General properties of the simplicial SIS model]
\label{lem:general}
Consider the simplicial SIS model with an irreducible $A\ge0$ and arbitrary
$B_i\ge0$. Then,
\begin{enumerate}
\item\label{l-1} The set $[0,1]^n$ is an invariant set.
\item\label{l-2} If $x(0)>\vect{0}_n$, then $x(t)\gg\vect{0}_n$ for any $t>0$.
\item\label{l-3} The origin $\vect{0}_n$ is an equilibrium of the system and there are no other equilibria on the boundary of the set $[0,1]^n$.
\end{enumerate}
\end{lemma}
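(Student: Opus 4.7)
My plan is to verify each statement directly from the structure of~\eqref{model_i}, exploiting the nonnegativity of $A,B_i$ and the irreducibility of $A$. For part~\ref{l-1} I will apply the standard tangency/Nagumo criterion on the faces of $[0,1]^n$: when $x_i=0$ and $x\in[0,1]^n$ the dissipative term $-\gamma_i x_i$ vanishes and the two contagion sums are nonnegative, so $\dot x_i\geq 0$; when $x_i=1$ the factors $(1-x_i)$ annihilate both contagion terms, leaving $\dot x_i=-\gamma_i<0$. Both show the vector field points inward, giving forward invariance of $[0,1]^n$.

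For part~\ref{l-2} I plan to use a single contradiction argument. First, the calculation above also yields $\dot x_i\geq -\gamma_i x_i$ along any trajectory in $[0,1]^n$, hence by Gr\"onwall $x_i(t)\geq x_i(0)e^{-\gamma_i t}$; so any initially positive coordinate stays strictly positive for all $t\geq 0$. Assume now, for contradiction, that $x_i(\tau)=0$ for some $\tau>0$ and some $i$. Since $x_i(t)\geq 0$ for all $t$ by part~\ref{l-1}, the point $\tau$ is a minimum, so $\dot x_i(\tau)=0$. Substituting $x_i(\tau)=0$ into~\eqref{model_i} gives
\[
0=\dot x_i(\tau)=\beta_1\sum_{j}a_{ij}x_j(\tau)+\beta_2\sum_{j,k}b_{ijk}x_j(\tau)x_k(\tau);
\]
both sums are nonnegative, hence both vanish, and in particular $a_{ij}x_j(\tau)=0$ for every $j$. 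Letting $S=\{\ell:x_\ell(\tau)=0\}$, this says every out-neighbor of any node in $S$ lies in $S$; equivalently there is no directed edge of $A$ from $S$ to $S^{\mathrm{c}}$. The case $S=\until{n}$ is ruled out by the exponential lower bound applied to any index with $x_i(0)>0$, while $S,S^{\mathrm{c}}\neq\emptyset$ contradicts strong connectivity (irreducibility of $A$). Hence no such $\tau$ exists and $x(t)\gg\vect{0}_n$ for all $t>0$.

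For part~\ref{l-3}, $\vect{0}_n$ is evidently an equilibrium of~\eqref{model_matrix}. Conversely, let $x^*\in[0,1]^n$ be an equilibrium with some $x^*_i=0$. Setting $\dot x_i=0$ in~\eqref{model_i} yields the identity displayed above applied at $x^*$, forcing $a_{ij}x^*_j=0$ for all $j$. Defining $S=\{\ell:x^*_\ell=0\}$, exactly as in part~\ref{l-2} we obtain no outgoing edges from $S$ to $S^{\mathrm{c}}$; since $x^*\neq\vect{0}_n$ requires $S^{\mathrm{c}}\neq\emptyset$ and the hypothesis $x^*_i=0$ requires $S\neq\emptyset$, irreducibility is contradicted. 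Thus $x^*=\vect{0}_n$ is the only boundary equilibrium.

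The main subtlety is in part~\ref{l-2}: one must combine the Gr\"onwall lower bound (ruling out the case $S=\until{n}$) with the graph-theoretic cut argument (ruling out proper $S$) to exclude every way positivity could be lost, and this relies on the key observation that whenever the trajectory touches the boundary $x_i=0$, that point is necessarily a critical point of $x_i$ so that $\dot x_i$ can be set to zero and the cut argument applied.
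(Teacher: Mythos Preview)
Your argument is essentially sound, with one omission in part~\ref{l-3}: the boundary of $[0,1]^n$ also contains the faces $\{x_i=1\}$, which you never rule out. The paper handles this first, observing that $x^*_i=1$ forces $f_i(x^*)=-\gamma_i<0$; you established exactly this inequality in part~\ref{l-1} but must invoke it again in part~\ref{l-3} before restricting attention to equilibria with some $x^*_i=0$. One small presentational point in part~\ref{l-2}: the sentence ``this says every out-neighbor of any node in $S$ lies in $S$'' requires repeating the interior-minimum argument for each $\ell\in S$, not just the initially chosen $i$; this is implicit but worth stating.

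Your proof of~\ref{l-2} takes a genuinely different route from the paper. The paper introduces $y=e^{\Gamma t}x$, shows $\dot y\geq\vect{0}_n$ so that $y$ is componentwise nondecreasing, and then argues that $y_i(T)=0$ forces $y_i\equiv 0$ on $[0,T]$; plugging into the ODE for $y_i$ then forces $y_j\equiv 0$ on $[0,T]$ for every out-neighbor $j$, and irreducibility propagates this to all coordinates, contradicting $x(0)>\vect{0}_n$. Your version avoids the change of variables: you combine the Gr\"onwall bound $x_i(t)\geq x_i(0)e^{-\gamma_i t}$ (which keeps initially positive coordinates strictly positive, ruling out $S=\until{n}$) with the first-order condition $\dot x_\ell(\tau)=0$ at an interior zero, applied at the single time $\tau$. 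This is more elementary and yields the cut contradiction directly; the paper's monotone-variable trick is slightly heavier but delivers the stronger intermediate fact that $e^{\Gamma t}x(t)$ is nondecreasing. For~\ref{l-3} the paper rewrites the equilibrium equation as the fixed-point identity $x^*=H_+(\Gamma^{-1}(\beta_1 Ax^*+\beta_2 B_{x^*}))$ before propagating zeros through $A$; your direct use of~\eqref{model_i} achieves the same conclusion with less machinery.
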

\begin{proof}
Let $f(x)$ be the right-hand side of equation~\eqref{model_matrix}. 
We first prove statement~\ref{l-1}. Following Nagumo's
theorem~\cite[Theorem~4.7]{FB-SM:15} we analyze the vector field at the
boundary of %the set 
$[0,1]^n$. From equation~\eqref{model_i}, we see that
1) $f_i(x)\geq 0$ for all $x\in[0,1]^n$ such that $x_i=0$ for some
$i\in\until{n}$; 2) $f_i(x)< 0$ for all $x\in[0,1]^n$ such that $x_i=1$
for some $i\in\until{n}$; from which it follows that $[0,1]^n$ is an
invariant set. This proves statement~\ref{l-1}.

Set the change of variables $y=e^{\Gamma t}x$. Then, from equation~\eqref{model_matrix},
\begin{equation}
\label{eq:aux_y}
\begin{aligned}
  \dot{y}&=\diag(e^{\gamma_1t},\cdots,e^{\gamma_nt})(I_n-\diag(x))(\beta_1Ax\\
  &\quad+\beta_2(x^\top B_1 x,\cdots,x^\top B_n x)^\top).
\end{aligned}
\end{equation}
Since $x(0)\in[0,1]^n$, notice that $\dot{y}(t)\geq\vect{0}_n$ for any
$t\geq 0$, and so there is the monotonicity property $y(t_1)\geq y(t_0)$
for any $t_1,t_0\geq 0$. Now, we prove statement~\ref{l-2} by
contradiction. Let us assume that $x(0)>\vect{0}_n$, which implies 
$y(0)>\vect{0}_n$, and that there exists some $i\in\until{n}$ and $T>0$ such
that $y_i(T)=0$. Then, because of the monotonicity property, $y_i(t)=0$ for
all $t\in[0,T]$, which implies that $x_i(t)=0$. Then, from the equilibrium
equation of~\eqref{eq:aux_y}, we have that
$0=\beta_1e^{\gamma_it}\sum^n_{\substack{j=1\\j\neq
    i}}a_{ij}e^{-\gamma_jt}y_j(t)+\beta_2e^{\gamma_i
  t}\sum^n_{\substack{j=1\\j\neq i}}\sum^n_{\substack{k=1\\k\neq
    i}}b_{ijk}e^{-\gamma_jt}e^{-\gamma_kt}y_j(t)y_k(t)$ for any
$t\in[0,T]$, and since all terms are non-negative, it follows that
$y_j(t)=0$ for $t\in[0,T]$ and all $j$ such that $a_{ij}> 0$. Then, 
for any such $j$, we repeat the same analysis we just did and find that $y_k(T)=0$ for all
$t\in[0,T]$ and all $k$ such that $a_{jk}>0$. Then,
since
$A$ is irreducible, we could continue 
repeating this procedure and finally obtain
%iterating this procedure and get that
$y(t)=\vect{0}_n$ for all $t\in[0,T]$. This gives a contradiction, since we
had that $y(0)>\vect{0}_n$ because of $x(0)>\vect{0}_n$. 
Then, 
$y(t)\gg\vect{0}_n$
%reversing the change
%of variables, we conclude that 
implies 
$x(t)\gg\vect{0}_n$ for $t>0$ and finish the
proof of statement~\ref{l-2}.
  
Finally, we prove statement~\ref{l-3}. First, let us introduce the functions $h_+(z)=\frac{z}{1+z}$ for any $z\in\R_{\geq 0}$ and $h_-(z)=\frac{z}{1-z}$ for $z\in[0,1)^n$. We also introduce $H_+(y)=(h_+(y_1),\dots,h_+(y_n))^\top$ for $y\geq\vect{0}_n$, and $H_-(y)=(h_-(y_1),\dots,h_-(y_n))^\top$ for $y\in[0,1)^n$. 

Now, it is immediate from equation~\eqref{model_matrix} that $\vect{0}_n$ is an equilibrium point, and observe that %$\vect{1}_n$ cannot be an equilibrium point. Moreover, 
%there cannot exist an 
there is no 
equilibrium point $x^*$ such that $x^*_i=1$ for some $i\in\until{n}$, since that would imply that $(f(x^*))_i<0$. 
Now, assume $x^*$ is an equilibrium point such that $x^*_i=0$ for some $i\in\until{n}$. Let %$B_{x^*}:=\begin{bmatrix}{x^*}^\top B_1 x^*\\\vdots\\ {x^*}^\top B_n x^*\end{bmatrix}$. 
$B_{x^*}:=({x^*}^\top B_1 x^*,\cdots,{x^*}^\top B_n x^*)^\top$. First, from the equilibrium equation of the system~\eqref{model_matrix}, since $x^*\ll\vect{1}_n$, we obtain
\begin{align*}
%%&\vect{0}_n=-x^*+(I_n-\diag(x^*))(\frac{\beta_1}{\Gamma}Ax^*+\frac{\beta_2}{\Gamma}\begin{bmatrix}{x^*}^\top B_1 x^*\\
%%\vdots\\
%%{x^*}^\top B_n x^*\end{bmatrix})\\
%%&\Longleftrightarrow  (I_n-\diag(x^*))^{-1}x^*=\frac{\beta_1}{\Gamma}Ax^*+\frac{\beta_2}{\Gamma}\begin{bmatrix}{x^*}^\top B_1 x^*\\
%%\vdots\\
%%{x^*}^\top B_n x^*\end{bmatrix}\\
%%&\Longleftrightarrow H_-(x^*)=\frac{\beta_1}{\Gamma}Ax^*+\frac{\beta_2}{\Gamma}\begin{bmatrix}{x^*}^\top B_1 x^*\\
%%\vdots\\
%%{x^*}^\top B_n x^*\end{bmatrix}\\
%%&\Longleftrightarrow H_+(\frac{\beta_1}{\Gamma}Ax^*+\frac{\beta_2}{\Gamma}\begin{bmatrix}{x^*}^\top B_1 x^*\\
%%\vdots\\
%%{x^*}^\top B_n x^*\end{bmatrix})=x^*.
%%
%%
&\vect{0}_n=-\Gamma x^*+(I_n-\diag(x^*))(\beta_1 Ax^*+\beta_2B_{x^*})\\
&\Longleftrightarrow  (I_n-\diag(x^*))^{-1}x^*=\Gamma^{-1}(\beta_1Ax^*+\beta_2B_{x^*})\\
&\Longleftrightarrow H_-(x^*)=\Gamma^{-1}(\beta_1Ax^*+\beta_2B_{x^*})\\
&\Longleftrightarrow H_+(\Gamma^{-1}(\beta_1Ax^*+\beta_2B_{x^*}))=x^*,
\end{align*}  
and so $x_i^*=h_+(\frac{\beta_1}{\gamma_i}\sum^n_{j=1}a_{ij}x_j^*+\frac{\beta_2}{\gamma_i}{x^*}^{\top}B_ix^*)$. Then, since $x^*_i=0$, this implies that $x_j^*=0$ for all $j$ such that $a_{ij}>0$. Then, since $A$ is irreducible, we could iterate %continue iterating 
this procedure and conclude that $x^*=\vect{0}_n$. 
%This means that 
Therefore, 
any equilibrium point at the boundary of $[0,1]^n$ must be the origin. This 
%finishes the proof of 
proves 
statement~\ref{l-3}. 
\end{proof}

From an epidemiological perspective, Lemma~\ref{lem:general} shows two important things for the well-posedness of the simplicial SIS model: 1) each entry of the state vector of the model can represent a proportion or probability; 2) there cannot exist another type of equilibria than disease-free or endemic ones. Now we present our main result.

\begin{theorem}[The simplicial SIS model and its different epidemiological domains]
\label{th:simplicial_main}
Consider the simplicial SIS model with an irreducible $A\ge0$ and arbitrary
$B_i\ge0$. Define $\ybar\in\{0,1\}^n$ by $(\ybar)_i=1$ if
$B_i\neq\vect{0}_{n\times{n}}$ and $(\ybar)_i=0$ otherwise.
\begin{description}
\item[Disease-free domain:] If %$\beta_1\rho(\Gamma^{-1}A)<1$ and
  \begin{equation*}
    \rho(\beta_1\Gamma^{-1}A+\beta_2\Gamma^{-1}(\vect{1}_n^\top
  B_1,\cdots,\vect{1}_n^\top B_n)^\top)<1,
  \end{equation*}
  then
  \begin{enumerate}
\item\label{fact-i} $\vect{0}_n$ is the unique equilibrium point in $[0,1]^n$,
\item\label{fact-ii} $\vect{0}_n$ is globally exponentially stable in $[0,1]^n$ with
  Lyapunov function $V(x)=\norm{x}_{1,\diag(v)\Gamma^{-1}}=v^\top\Gamma^{-1}x$, where $v$ is the
  dominant left eigenvector of
  $\beta_1\Gamma^{-1}A+\beta_2\Gamma^{-1}(\vect{1}_n^\top
  B_1,\cdots,\vect{1}_n^\top B_n)^\top$.
\setcounter{saveenum}{\value{enumi}}  
\end{enumerate}
\item[Bistable domain:] If $\beta_1\rho(\Gamma^{-1}A)<1$ and
%  \begin{equation*}
%    \min_{i \text{ s.t. }B_i\neq\vect{0}_{n\times n}}
%    \Big(\frac{2\beta_1}{\gamma_i}(A\ybar)_i  + \frac{\beta_2}{\gamma_i}
%    \ybar^\top B_i\ybar\Big)\geq 4,
%  \end{equation*}
  \begin{equation*}
    \min_{i \text{ s.t. }B_i\neq\vect{0}_{n\times n}}
    \Big(\frac{\beta_1}{\gamma_i}(A\ybar)_i  + \frac{\beta_2}{2\gamma_i}
    \ybar^\top B_i\ybar\Big)\geq 2,
  \end{equation*}
  then
\begin{enumerate}\setcounter{enumi}{\value{saveenum}}
\item\label{fact-iii} $\vect{0}_n$ is a locally exponentially stable equilibrium,
\item\label{fact-iv} there exists an equilibrium point $x^*\gg\vect{0}_n$ such that
  $x^*_i\geq\frac{1}{2}$ for any $i$ such that
  $B_i\neq\vect{0}_{n\times{n}}$, and
\item\label{fact-v} any such equilibrium point $x^*$ is locally exponentially stable.
    \setcounter{saveenum}{\value{enumi}}
\end{enumerate}
\item[Endemic domain:] If $\beta_1\rho(\Gamma^{-1}A)>1$, then
\begin{enumerate}\setcounter{enumi}{\value{saveenum}}
\item\label{fact-vi} $\vect{0}_n$ is an unstable equilibrium,
\item\label{fact-vii} there exists an equilibrium point $x^*\gg \vect{0}_n$ in $[0,1]^n$, and
\item\label{fact-viii} if $\beta_2$ is sufficiently small, then $x^*$ is unique in $(0,1]^n$
  and it is globally exponentially stable in
  $[0,1]^n\setminus\{\vect{0}_n\}$, with Lyapunov function
    $V(x)=\norm{x-x^*}_{\infty,\diag(x^*)^{-1}}$, $x\in\mathcal{X}$.
\end{enumerate}
\end{description}
Moreover, if $\beta_1\rho(\Gamma^{-1}A)<1$, then the system is either in the disease-free domain or in the bi-stable domain.
\end{theorem}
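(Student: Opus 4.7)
The three regimes share a common backbone: Perron--Frobenius spectral analysis, matrix measures, and the fixed-point reformulation
\begin{equation*}
T(x) := H_+\bigl(\Gamma^{-1}(\beta_1 A x + \beta_2(x^\top B_1 x,\dots,x^\top B_n x)^\top)\bigr)
\end{equation*}
introduced in the proof of Lemma~\ref{lem:general}, whose fixed points in $[0,1)^n$ are exactly the equilibria. Two elementary entrywise bounds valid on $[0,1]^n$ are the workhorses: $(I_n-\diag(x))Ax \le Ax$ and $x^\top B_i x = \sum_{j,k} b_{ijk} x_j x_k \le \vect{1}_n^\top B_i x$ (from $x_j \le 1$).

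For the \emph{disease-free regime}, let $C := \beta_1 \Gamma^{-1}A + \beta_2 \Gamma^{-1}(\vect{1}_n^\top B_1,\dots,\vect{1}_n^\top B_n)^\top$ and take its left Perron eigenvector $v \gg \vect{0}_n$. Substituting the two bounds above into $\dot V = v^\top \Gamma^{-1} \dot x$ with $V(x) := v^\top \Gamma^{-1} x$ yields $\dot V \le -v^\top x + v^\top C x = -(1-\rho(C))\, v^\top x$, negative-definite since $\rho(C)<1$; this delivers uniqueness of $\vect{0}_n$ and global exponential stability in one stroke.

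For the \emph{bistable regime}, fact~\ref{fact-iii} is Lyapunov's indirect method applied to the Metzler Jacobian $-\Gamma + \beta_1 A$ at $\vect{0}_n$, Hurwitz iff $\beta_1 \rho(\Gamma^{-1}A)<1$. For fact~\ref{fact-iv} I apply Brouwer on the convex compact set $\mathcal{K}:=\{x\in [0,1]^n : x_i \ge \tfrac{1}{2} \text{ whenever } B_i\ne\vect{0}_{n\times n}\}$: for such $i$ the bound $x\ge\tfrac{1}{2}\ybar$ and the theorem's hypothesis force the argument of $h_+$ in $T(x)_i$ to be at least $\tfrac{\beta_1}{2\gamma_i}(A\ybar)_i + \tfrac{\beta_2}{4\gamma_i}\ybar^\top B_i\ybar \ge 1$, so $T(x)_i \ge h_+(1) = 1/2$ and $T(\mathcal{K})\subseteq \mathcal{K}$; the Brouwer fixed point $x^*\in\mathcal{K}$ is upgraded to $x^*\gg\vect{0}_n$ via Lemma~\ref{lem:general}\ref{l-3}. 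For fact~\ref{fact-v}, the Jacobian $M := Df(x^*)$ is Metzler, and substituting the equilibrium identity $\gamma_i x_i^* = (1-x_i^*)[\beta_1 (Ax^*)_i + \beta_2 x^{*\top}B_i x^*]$ collapses $(M x^*)_i$ to $-\beta_1 x_i^* (Ax^*)_i + \beta_2 (1 - 2x_i^*)\, x^{*\top} B_i x^*$. The bistable inequality $x_i^* \ge 1/2$ makes the $\beta_2$-term nonpositive, and irreducibility of $A$ with $x^*\gg\vect{0}_n$ makes the $\beta_1$-term strictly negative, so $M x^*\ll \vect{0}_n$; via~\eqref{eq:1-aux} this gives $\mu_{\infty,\diag(x^*)^{-1}}(M)<0$ and $M$ Hurwitz.

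For the \emph{endemic regime}, fact~\ref{fact-vi} is the instability side of the same Jacobian at $\vect{0}_n$. For fact~\ref{fact-vii} I let $u\gg\vect{0}_n$ be the dominant right Perron eigenvector of $\beta_1\Gamma^{-1}A$ (eigenvalue $\rho>1$); a first-order expansion yields $T(\epsilon u)\ge\epsilon u$ for small $\epsilon$, and componentwise monotonicity of $T$ (all partials $\partial T_i/\partial x_j\ge 0$) makes the iterates $T^k(\epsilon u)$ monotone-increasing and bounded, converging to a fixed point $x^*\ge\epsilon u\gg\vect{0}_n$. Fact~\ref{fact-viii} is the \emph{main obstacle}: my plan is to apply Theorem~\ref{theorem:append} with $V(x)=\|x-x^*\|_{\infty,\diag(x^*)^{-1}}$ by writing $f(x)=\mathcal{D}(x,x^*)(x-x^*)$ via the fundamental theorem of calculus and establishing $\mu_{\infty,\diag(x^*)^{-1}}(\mathcal{D}(x,x^*))<-c$ uniformly over $x\in[0,1]^n$; for $\beta_2=0$ this recovers the classical multi-group SIS bound, and a continuity/compactness argument extends strict negativity to small $\beta_2>0$, yielding global exponential stability and, automatically, uniqueness of $x^*$. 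Finally, the \emph{moreover} clause follows from cooperativity of the vector field ($\partial f_i/\partial x_j \ge 0$ for $j\neq i$): $\beta_1\rho(\Gamma^{-1}A)<1$ makes $\vect{0}_n$ locally exponentially stable and therefore excludes the endemic regime, while monotone dynamical systems theory forces the dichotomy between global convergence of all trajectories to $\vect{0}_n$ (disease-free) and the existence of an interior equilibrium that is locally stable as the monotone limit of non-trivial trajectories (bistable).
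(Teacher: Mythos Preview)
Your treatment of the disease-free and bistable regimes, and of facts~\ref{fact-vi} and~\ref{fact-vii} in the endemic regime, follows the paper's proof closely. Two harmless variations: you deduce uniqueness of $\vect{0}_n$ directly from global exponential stability (the paper proves it separately via a fixed-point iteration argument on $H$), and for fact~\ref{fact-vii} you use monotone iteration of $T$ from $\epsilon u$ instead of Brouwer on a box $\{\alpha v\le y\le\vect{1}_n\}$; both alternatives are valid.

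The genuine gap is in fact~\ref{fact-viii}. Your plan is to establish $\mu_{\infty,\diag(x^*)^{-1}}(\mathcal{D}(x,x^*))\le -c$ \emph{uniformly over $x\in[0,1]^n$}, but this is impossible: since $\vect{0}_n$ is itself an equilibrium, $f(\vect{0}_n)-f(x^*)=\mathcal{D}(\vect{0}_n,x^*)(\vect{0}_n-x^*)=\vect{0}_n$, hence $\mathcal{D}(\vect{0}_n,x^*)x^*=\vect{0}_n$ and by~\eqref{eq:1-aux} the matrix measure at $x=\vect{0}_n$ is $\ge 0$. Already at $\beta_2=0$ the classical multi-group SIS bound is \emph{not} uniformly negative on $[0,1]^n$; it is negative only on sets bounded away from the origin. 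The paper handles this by first constructing, for arbitrarily small $\alpha>0$, a forward-invariant box $Y=\{y:\alpha v\le y\le\vect{1}_n\}$ (checking Nagumo's condition using $\beta_1\rho(\Gamma^{-1}A)>1$), then rewriting $\dot x=\mathcal{D}(x,x^*)(x-x^*)$ via an explicit algebraic decomposition $\mathcal{D}=\mathcal{D}_1+\mathcal{D}_2$ with $\mathcal{D}_1 x^*=\vect{0}_n$ (from the equilibrium identity) and $\mathcal{D}_2(x,x^*)x^*\le\big(-\beta_1\diag(A\alpha v)+\beta_2(I_n-\diag(x^*))(\vect{1}_n^\top B_1^\top,\dots,\vect{1}_n^\top B_n^\top)^\top\big)x^*$, where the lower bound $x\ge\alpha v$ from membership in $Y$ is what produces the strictly negative $-\beta_1\diag(A\alpha v)$ term that $\beta_2$ must be small enough not to overwhelm. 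Trajectories starting in $[0,1]^n\setminus\{\vect{0}_n\}$ enter $Y$ by Lemma~\ref{lem:general}\ref{l-2}. Your continuity/compactness sketch does not supply this invariant-set construction and therefore cannot close the argument.

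On the ``moreover'' clause: the paper's argument is much more modest than your monotone-systems sketch. It simply notes that $\beta_1\rho(\Gamma^{-1}A)<1$ forces local exponential stability of $\vect{0}_n$ (the linearization computation from fact~\ref{fact-iii}), which by definition excludes the endemic domain. Your cooperativity argument aims at something strictly stronger---that the trichotomy of Definition~\ref{def:epidemic_domains} is exhaustive---which the paper neither claims nor proves; as written your sketch is too vague to establish that.
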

%
%Before proving the theorem, we provide some remarks.
%
\begin{remark}[About Theorem~\ref{th:simplicial_main}] 
  \label{rem-main-th} 
%  \begin{enumerate}
(i) Pick $\beta_1$ satisfying $\beta_1\rho(\Gamma^{-1}A)<1$. Assume
    either that each $B_i$ is non-zero (all agents have higher-order interactions), or that each non-zero $B_i$ has a positive 
    $i$th diagonal entry (agent $i$ suffers from cumulative infection effects from her neighbors; e.g., see Remark~\ref{rem:1}). Then there exists some %positive value 
    $\hat{\beta}_2>0$ such that the second condition for the bistable domain is satisfied for $\beta_2=\hat{\beta}_2$ and the
    simplicial SIS model is in the bistable domain %region 
    for any
    $\beta_2\geq\hat{\beta}_2$.
%
%  \item Property~\eqref{eq:monotonicity} implies that the curve
%    $t\mapsto\rho(\beta_1\Gamma^{-1}A+t\Gamma^{-1}(\vect{1}_n^\top
%    B_1,\cdots,\vect{1}_n^\top B_n)^\top)$ is monotonically increasing for
%    $t\geq 0$.

(ii) Compared to the scalar model in Theorem~\ref{th:scalar}, the
    sufficient conditions in Theorem~\ref{th:simplicial_main} defining the
    different domains for the simplicial SIS model do not exhaust all the
    possible values for $(\beta_1,\beta_2)$; e.g., see~Fig.~\ref{f:sim1}. Despite this gap, our theorem
    rigorously establishes the following crucial qualitative behavior:
    assume there exist parameters $(\beta_1,\beta_2)$ that satisfy the
    sufficient condition for the bistable region in
    Theorem~\ref{th:simplicial_main}, then we can show the system can
    transition from the disease-free domain to the bistable domain (and
    vice versa) by modifying $\beta_2$.  This transition, presented as a
    novelty for the scalar model, is also a novelty of the simplicial SIS
    model.

(iii) In the literature on the classical multi-group SIS model, where
    only the disease-free and endemic domains exist, the number
    $\beta_1\rho(\Gamma^{-1}A)$ is known as the \emph{reproduction number}
    and its value has been used to determine whether the system is in the
    endemic domain or not. This %Notice how this 
    number has a similar role for
    the simplicial SIS model. 
    Indeed, if all higher-order interaction matrices $B_i$ are equal to
    zero, then our theorem reduces to and restates some properties of the
    classical multi-group SIS model, e.g., see~\cite[Theorems~4.2
      and~4.3]{WM-SM-SZ-FB:16f}.

(iv) In the classical SIS multi-group model, the
      work~\cite{AF-AI-GS-JJT:07} uses the Lyapunov function
      $V(x)=\norm{x-x^*}_{1,\diag(x^*)}$ to show asymptotic convergence to
      the a unique endemic state $x^*\in[0,1]^n\setminus\{0\}_n$. Note that 
      Theorem~\ref{theorem:append} generalizes~\cite[Theorem~2.7]{AF-AI-GS-JJT:07}.
\end{remark}
%% \pctodo{Can some points of this remark be a theorem/proposition/lemma? It
%%   might be better to add it like that instead of a big remark to emphasize
%%   the importance of mathematical results. I am also thinking about what you
%%   once mentioned to me about the ration between "length" and
%%   "contribution".}
%
\begin{proof}[Proof of Theorem~\ref{th:simplicial_main}]
Let us consider the functions $H_+$ and $h_+$ introduced in the proof of Lemma~\ref{lem:general}. Let $\bar{A}:=\beta_1\Gamma^{-1}A$, $\bar{B}_i:=\frac{\beta_2}{\gamma_i}B_i$ for $i\in\until{n}$, and let $\dot{x}:=f(x)$. We introduce the following result: if $\vect{0}_n\leq y\ll z$ and $C\geq 0$ an $n\times{n}$ irreducible matrix, 
then $H_+(Cy)\ll H_+(Cz)$. This follows from the fact that, since
$C$ is irreducible, there exists at least one positive entry in some
off-diagonal entry in any row of $C$, and so $C(z-y)\gg \vect{0}_n$. Then,
$Cz\gg Cy$, and since $h_+$ is monotonically increasing, then
$H_+(Cy)\ll H_+(Cz)$. Similarly, if $\vect{0}_n\leq y\leq z$ and $C\geq 0$ an $n\times{n}$ matrix (not necessarily irreducible), then $H_+(Cy)\leq H_+(Cz)$. 
We use these results throughout the rest of this proof. %in the next paragraph.

We first prove fact~\ref{fact-i} by contradiction. 
%First, we prove that $\vect{0}_n$ is the unique equilibrium in $[0,1]^n$. 
Let $x^*$ be an equilibrium different than the origin. From the proof of Lemma~\ref{lem:general}, % we observe that 
$x^*$
is an equilibrium point if and only if
$H_+(\bar{A}x^*+({x^*}^\top\bar{B}_1x^*,\cdots,{x^*}^\top\bar{B}_nx^*)^\top)=x^*$,
i.e., if and only if $x^*$ is the fixed point of the map
$H(x):=H_+(\bar{A}x+(x^\top\bar{B}_1x,\cdots,x^\top\bar{B}_nx)^\top)$. Now,
  observe that
\begin{align*}
H(x^*)&\leq \bar{A}x^* + ({x^*}^\top\bar{B}_1x^*,\cdots,{x^*}^\top\bar{B}_nx^*)^\top\\
&\leq \bar{A}x^*+(\vect{1}_n^\top\bar{B}_1x^*,\cdots,\vect{1}_n^\top\bar{B}_nx^*)^\top
\end{align*}
where the first inequality follows from $h_+(z)\leq z$ for $z\in(0,1]$ and the second one from $x^*\in[0,1]^n$. Now, observe that if $\vect{0}_n\leq x\leq y$ then $\vect{0}_n\leq H(x)\leq \bar{A}x+(\vect{1}_n^\top\bar{B}_1x,\cdots,\vect{1}_n^\top\bar{B}_nx)^\top\leq \bar{A}y+(\vect{1}_n^\top\bar{B}_1y,\cdots,\vect{1}_n^\top\bar{B}_ny)^\top$; and so, the $k$th iteration of the map $H$ satisfies: $\vect{0}_n\leq H^k(x^*)\leq (\bar{A}+(\vect{1}_n^\top\bar{B}_1,\cdots,\vect{1}_n^\top\bar{B}_n)^\top)^kx^*$. Now, assume by contradiction that $x^*\neq\vect{0}_n$. Then, from our previous calculations, $0\leq \norm{H^k(x^*)-H^k(0)}\leq \norm{(\bar{A}+(\vect{1}_n^\top\bar{B}_1,\cdots,\vect{1}_n^\top\bar{B}_n)^\top)^k}\norm{x^*}$ since $H^k(\vect{0}_n)=\vect{0}_n$ and where the last inequality follows from the definition of induced norms. Now, by hypothesis, we have that $\rho(\bar{A}+(\vect{1}_n^\top\bar{B}_1,\cdots,\vect{1}_n^\top\bar{B}_n)^\top)<1$, and so, it follows that $\lim_{k\to\infty}(\bar{A}+(\vect{1}_n^\top\bar{B}_1,\cdots,\vect{1}_n^\top\bar{B}_n)^\top)^k=\vect{0}_{n\times{n}}$. Then, by the Sandwich theorem, $\lim_{k\to\infty}\norm{H^k(x^*)-H^k(0)}=0$ but recalling that $H^k(x^*)=x^*$ since $x^*$ is a fixed point of $H$, we obtain $\norm{x^*}=\vect{0}_n$, which is a contradiction. Then, $\vect{0}_n$ is the unique fixed point in $[0,1]^n$ for the map $H$, and thus, the unique equilibrium point for the system.

Now we prove fact~\ref{fact-ii}. 
%that $\vect{0}_n$ is globally exponentially stable in $[0,1]^n$. 
Since $\bar{A}\geq 0$ is irreducible, let $v\gg\vect{0}_n$ be the left Perron-Frobenius eigenvector of %$\beta_1 A+\beta_2(\vect{1}_n^\top B_1,\cdots,\vect{1}_n^\top B_n)^\top$ 
$\bar{A}+(\vect{1}_n^\top \bar{B_1},\cdots,\vect{1}_n^\top \bar{B_n})^\top\geq 0$
%which is $v\gg\vect{0}_n$
~\cite[Theorem~8.4.4.]{RAH-CRJ:12}, and let $\lambda:=\rho(\bar{A}+(\vect{1}_n^\top \bar{B}_1,\cdots,\vect{1}_n^\top \bar{B}_n)^\top)$ be its %Perron-Frobenius 
eigenvalue. 
%$\rho:=\rho(\frac{\beta_1}{\Gamma}A+\frac{\beta_2}{\Gamma}(\vect{1}_n^\top B_1,\cdots,\vect{1}_n^\top B_n)^\top)$. 
%
Set $y=v^\top\Gamma^{-1}x$, then $\dot{y}=v^\top\Gamma^{-1}\dot{x}$ and
\begin{align*}
%\dot{y}&\leq-v^\top \Gamma x+v^\top\Gamma(\bar{A}x+(x^\top\bar{B}_1x,\cdots,x^\top\bar{B}_nx)^\top)\\
\dot{y}&\leq-v^\top x+v^\top(\bar{A}x+(x^\top\bar{B}_1x,\cdots,x^\top\bar{B}_nx)^\top)\\
%&\leq -(\max_i\gamma_i)v^\top x + (\min_i\gamma_i) v^\top(\bar{A}+(\vect{1}_n^\top\bar{B}_1,\cdots,\vect{1}_n^\top\bar{B}_n)^\top)x\\
&\leq(-1+ \lambda)v^\top x\\
&=(-1+\lambda)v^\top\Gamma\Gamma^{-1}x\leq(-1+ \lambda)(\min_i\gamma_i)y,
%&\leq (-(\min_i\gamma_i) + (\max_i\gamma_i)\lambda) y,
%&\leq (-(\max_i\gamma_i) + (\min_i\gamma_i)\lambda) y,
%
%\top(-\Gamma+\bar{A})x+v^\top(x^\top\bar{B}_1x,\cdots,x^\top\bar{B}_nx)^\top
\end{align*}
where the first inequality follows from $v^\top\Gamma^{-1}(I_n-\diag(x))\leq v^\top\Gamma^{-1}$ for any $x\in[0,1]^n$. Set $q:=(-1+ \lambda)(\min_i\gamma_i)<0$. Then, the Comparison Lemma~\cite{HKK:02} implies $0\leq y(t)\leq y(0)e^{qt}$ for $t\geq 0$; and thus $y(t)$ is a Lyapunov function. From this it follows that $x_i(t)\leq \frac{v^\top x(0)}{v_i}e^{qt}$ and so $\norm{x(t)}_1\leq C_o e^{qt}$ for some constant $C_o>0$, which finally implies that $\vect{0}_n$ is globally exponentially stable in $[0,1]^n$. %This finishes the proof of statement~\ref{t-1}.

%=======================================================

We prove fact~\ref{fact-iii} by linearization. First, observe that the Jacobian
evaluated at the equilibrium point $\vect{0}_n$ is
$\dot{x}=(-\Gamma+\beta_1 A)x$. Since $A$ is irreducible, let
$v\gg\vect{0}_n$ be the right Perron-Frobenius eigenvector of
$\beta_1\Gamma^{-1}A$; and let $\rho$ denote its associated
eigenvalue. Note that $-\Gamma+\beta_1 A$ is Metzler and $(-\Gamma +\beta_1
A)v=(-1+\rho)\Gamma v\ll\vect{0}_n$ since $-1+\rho<0$ by assumption.
Using~\cite[Theorem 15.17]{FB:20}, we conclude that the matrix
$-\Gamma+\beta_1 A$ is Hurwitz and so the origin is locally exponentially
stable.

Now we prove fact~\ref{fact-iv} by finding a fixed point of map $H$ satisfying the stated conditions. %Now, 
First, we introduce the following result: for any $\alpha>1$, $h_+(\alpha
z)\geq z$ with $z\geq 0$ if and only if $z\leq 1-\frac{1}{\alpha}$. Now,
consider the vector $\ybar$ as in the theorem statement and define
$Y=\setdef{y\in[0,1]^n}{\frac{1}{2}\ybar\leq y \leq \vect{1}_n}$ and
$\theta:=\min_{i \text{ s.t. }B_i\neq\vect{0}_{n\times n}}
\Big(\frac{2\beta_1}{\gamma_i}(A\ybar)_i + \frac{\beta_2}{\gamma_i}
\ybar^\top B_i\ybar\Big)$.  Note that $\theta\geq 4$ by hypothesis. Let
$y\in Y$, then
\begin{equation}
  \label{eq:aux_H}
  \begin{aligned}
    H(y)&=H_+(\bar{A}y+(y^\top\bar{B}_1y,\cdots,y^\top\bar{B}_ny)^\top)\\
    &\geq H_+\Big(\frac{1}{2}\bar{A}\ybar+\frac{1}{4}(\ybar^\top\bar{B}_1,\cdots,\ybar^\top\bar{B}_n)^\top\ybar\Big),
  \end{aligned}
\end{equation} 
%where the inequality follows from the monotonicity of the function $h_+$.
where the monotonicity of the function $h_+$ implies the inequality. 
Now, the $i$th entry of the argument of $H_+$ in right-hand side
of~\eqref{eq:aux_H} is $\frac{1}{4}(\frac{2\beta_1}{\gamma_i}\sum_{j=1}^na_{ij}(\ybar)_j+\frac{\beta_2}{\gamma_i}\ybar^\top
B_i\ybar)$. When $B_i\neq\vect{0}_{n\times{n}}$, we can lower bound the
$i$th entry by $\frac{1}{4}\theta$; and when $B_i=\vect{0}_{n\times{n}}$, %we
%can lower bound it 
by $0$. Therefore, from~\eqref{eq:aux_H},% we obtain
$$H(y)\geq H_+\Big(\frac{1}{4}\theta\ybar\Big)\geq \frac{1}{2}\ybar$$ where the
last inequality follows from our statement at the beginning of the paragraph. Now,
from the fact that $h_+(z)\leq 1$ for any $z\geq 0$, then %we have that
$H(y)=(H_+\bar{A}y+(y^\top\bar{B}_1y,\cdots,y^\top\bar{B}_ny)^\top)\leq\vect{1}_n$. Then,
we conclude that $H:Y\to Y$, and so $H$ is a continuous map that maps $Y$
into itself. The Brouwer Fixed-Point Theorem (e.g.,
see~\cite[Theorem~4.5]{JHS:16}) implies that there exists $y^*\in Y$ such
that $H(y^*)=y^*$, i.e., an equilibrium point $y^*$ for the system which
belongs to $Y$. %\pc{Note that 
This equilibrium point $y^*$ is not guaranteed to be unique. 
Moreover, from statement \ref{l-3} of
Lemma~\ref{lem:general}, we conclude that no entry of $y^*$ can be zero,
and so $y^*\gg\vect{0}_n$.

Now, we prove fact~\ref{fact-v} by linerarization. Let $x^*$ be an equilibrium of the system %different than $\vect{0}_n$,
such that $x^*\geq \frac{1}{2}\ybar$ with $x^*\gg\vect{0}_n$. We denote by $Df$ the Jacobian of the vector field $f$. Then 
%
% Evaluating the Jacobian of the
%system at $x^*$, namely $Df(x^*)$, 
we obtain
\begin{equation*}
Df(x^*)=-\Gamma+\beta_1(I_n-\diag(x^*))A-\beta_1\diag(Ax^*)\\
+\beta_2(I_n-\diag(x^*))
O_1(x^*)
-\beta_2 O_2(x^*),
\end{equation*}
with $O_1(x^*):=({x^*}^\top(B_1+B_1^\top),\cdots,{x^*}^\top(B_n+B_n^\top))^\top$ and
$O_2(x^*):=\diag({x^*}^\top B_1 x^*,\cdots,{x^*}^\top B_n x^*)^\top$.  
Clearly, $Df(x^*)$ is a Metzler matrix. Now, observe that
\begin{multline}
\label{eq:aux_Jacob}
Df(x^*)x^*=-\beta_1\diag(Ax^*)x^*+\beta_2(I_n-\diag(x^*))({x^*}^\top B_1x^*,\cdots,{x^*}^\top B_n x^*)^\top\\
\quad-\beta_2\diag({x^*}^\top B_1x^*,\cdots,{x^*}^\top B_n x^*)x^*,
\end{multline}
where we simplified terms by using the equilibrium equation for the system~\eqref{model_matrix}. Let $(Df(x^*)x^*)_i$ be the $i$th entry of the left-hand side of equation~\eqref{eq:aux_Jacob}. Then
\begin{equation}
\label{eq:aux_Jacob_ith}
(Df(x^*)x^*)_i=-\beta_1\left(\sum_{i=1}^n a_{ij}x^*_j\right)x^*_i+\beta_2(1-2x_i^*)({x^*}^\top B_i x^*).
\end{equation}
First, consider $B_i\neq\vect{0}_{n\times{n}}$. Then, it follows that $x^*_i\geq\frac{1}{2}$ and that $(1-2x^*_i)\leq 0$. In turn we obtain, in~\eqref{eq:aux_Jacob_ith},
\begin{equation*}
%\label{eq:aux_Jacob_ith}
\begin{aligned}
(Df(x^*)x^*)_i
%&\leq-\beta_1\left(\sum_{i=1}^n a_{ij}x^*_j\right)x^*_i
\leq -\left(\beta_1\min_j
\left(\sum_{i=1}^n a_{ij}x^*_j\right)\right)x_i^*.
\end{aligned}
\end{equation*}
On the other hand, if $B_i=\vect{0}_{n\times{n}}$, then
$(Df(x^*)x^*)_i=-\beta_1\left(\sum_{i=1}^n a_{ij}x^*_j\right)x^*_i$
in~\eqref{eq:aux_Jacob_ith}.  Therefore, from these two cases, we conclude 
$Df(x^*)x^*\leq -dx^*$ for some $d>0$ since $A$ is irreducible. Then,
since $x^*\gg\vect{0}_n$ \cite[Theorem 15.17]{FB:20} implies that $Df(x^*)$
is Hurwitz, and so $x^*$ is locally exponentially stable. 
%This concludes the proof of statement~\ref{t-2}.
%This proves statement~\ref{t-2}.
%=======================================================

Now we prove fact~\ref{fact-vi}. First we prove that $\vect{0}_n$ is an
unstable equilibrium.  The linearization respect to the equilibrium point
$\vect{0}_n$ is $\dot{x}=(-\Gamma+\beta_1 A)x$. Let $v\gg\vect{0}_n$ be the
right Perron-Frobenius vector of the matrix $\beta_1\Gamma^{-1}A$, and let
$\rho$ be its associated eigenvalue. Now, since $-\Gamma+\beta_1
A$ is Metzler, $\rho>1$, and $A$ is irreducible; we
  invoke~\cite[E10.15]{FB:20} to conclude that the leading eigenvalue
of $-\Gamma+\beta_1 A$ is strictly positive.

Next we prove fact~\ref{fact-vii}. Define $Y=\setdef{y\in[0,1]^n}{c\leq y \leq \vect{1}_n}$ for a fixed $c=\alpha v$ %with $u\gg\vect{0}_n$ being the Perron-Frobenius eigenvector of $\bar{A}$ (since $\bar{A}$ is irreducible) 
and $0<\alpha<1$ small enough so that $c\leq \left(1-\frac{1}{\rho}\right)\vect{1}_n$, which is well-posed since $\rho>1$ by assumption. Let $y\in Y$, then 
\begin{align*}
H(y)=H_+(\bar{A}y+(y^\top\bar{B}_1y,\cdots,y^\top\bar{B}_ny)^\top)H_+(\bar{A}y)\geq H_+(\alpha\rho v)=H_+(\rho c)\geq c
\end{align*} 
where the inequalities are similar to the ones used in the 
%first part of 
the proof of 
%statement~\ref{t-2}. 
fact~\ref{fact-iv}. 
Since we know also that $H(y)\leq\vect{1}_n$, the Brouwer
Fixed-Point Theorem %(e.g., see~\cite[Theorem~4.5]{JHS:16}) 
implies that there exists
some $y^*\in Y$ such that $H(y^*)=y^*$, i.e., there exists an equilibrium point
$y^*\in Y$ for the system %which belongs to $Y$ 
and, by construction,
$y^*\gg \vect{0}_n$.

Now, we prove fact~\ref{fact-viii}. First, we prove that $Y$ can be made a forward-invariant set for the system~\eqref{model_matrix}, and then we establish conditions for the existence of a unique exponentially stable equilibrium in $Y$. 
If $x\in Y$, then $x_i\in[c_i,1]$. Then, we can use Nagumo's theorem~\cite[Theorem~4.7]{FB-SM:15} and analyze the vector field at the
boundary of $Y$, which is an $n$-dimensional rectangle. As in the proof for statement~\ref{l-1} of Lemma~\ref{lem:general}, we have that $f_i(x)< 0$ for all $x\in Y$ such that $x_i=1$
for some $i\in\until{n}$. Then, we need to analyze only the case where $x\in Y$ with $x_i=c_i=\alpha v_i$ for some $i\in\until{n}$. Consider such $x$. Then, 
\begin{align*}
f_i(x)&=-\gamma_ic_i+\beta_1(1-c_i)\sum^n_{j=1}a_{ij}x_j+\beta(1-c_i)x^\top B_1 x\\
&\geq-\gamma_ic_i+\beta_1(1-c_i)\sum^n_{j=1}a_{ij}c_j\\
&=-\alpha\gamma_iv_i+\alpha\gamma_i(1-\alpha v_i)\frac{\beta_1}{\gamma_i}\sum^n_{j=1}a_{ij}v_j\\
&=\alpha\gamma_i(-1+(1-\alpha v_i)\rho(\bar{A}))v_i,
\end{align*}
and so $f_i(x)\geq 0$ if $\rho(\bar{A})\geq\frac{1}{1-\alpha v_i}$. Then, if $\rho(\bar{A})\geq\frac{1}{1-\alpha\max_i v_i}$, we conclude that %the set 
$Y$ is forward invariant. 
Now, by construction of $Y$, %we observe 
we can make the parameter $\alpha>0$ arbitrarily small, and since $\rho(\bar{A})>1$ by assumption, then we conclude that $Y$ is forward invariant. 
%(NOTE: I guess I don't need to say for sufficiently large $\rho(A)$, do I?).
Indeed, since $c\to\vect{0}_n$ as $\alpha\to 0$,
we can define the positively invariant set $Y$ to include any initial condition in $(0,1]^n$. Moreover, from statement~\ref{l-2} of Lemma~\ref{lem:general}, we conclude that any trajectory starting in $[0,1]^n\setminus\{\vect{0}_n\}$ eventually enters the positive invariant set $Y$. 

Now, let $x^*$ be an equilibrium point of the system belonging to $Y$, so
that $x^*\gg\vect{0}_n$ and let us consider the system~\eqref{model_matrix} starting in the set $Y$. By subtracting the right-hand side of the
equilibrium equation $\vect{0}_n=f(x^*)$, we can express the same
equation~\eqref{model_matrix} as
\begin{equation*}
\dot{x}=\Lambda(x,x^*)(x-x^*)+\beta_2\Omega(x,x^*)
\end{equation*}
with 
%\begin{equation*}
%%\label{eq:Lambda}
%\Lambda(x,x^*):=-\Gamma+\beta_1(I_n-\diag(x^*))A-\beta_1\diag(Ax)
%\end{equation*}
$\Lambda(x,x^*):=-\Gamma+\beta_1(I_n-\diag(x^*))A-\beta_1\diag(Ax)$ 
and
\begin{equation*}
\Omega(x,x^*):=(I_n-\diag(x))(x^\top B_1 x,\cdots,x^\top B_n x)^\top-(I_n-\diag(x^*))({x^*}^\top B_1 x^*,\cdots,{x^*}^\top B_n x^*)^\top,
\end{equation*}
and after some calculations, 
\begin{align*}
\Omega(x,x^*)= \Big( (I_n-\diag(x^*)) \begin{bmatrix}x^\top B_1^\top+{x^*}^\top B_1\\
\vdots\\
x^\top B_n^\top+{x^*}^\top B_n
\end{bmatrix}-\diag(x^\top B_1 x,\cdots,x^\top B_n x)\Big) (x-x^*).
\end{align*}
Then, we can have the alternative expression for~\eqref{model_matrix} as
\begin{equation*}
\dot{x}=\mathcal{D}(x,x^*)(x-x^*)
\end{equation*}
with $\mathcal{D}(x,x^*):=(\mathcal{D}_1(x,x^*)+\mathcal{D}_2(x,x^*))$ and 
\begin{align*}
%\label{eq:Lambda}
\mathcal{D}_1(x,x^*)&:=-\Gamma+\beta_1(I_n-\diag(x^*))A+\beta_2(I_n-\diag(x^*))({x^*}^\top B_1,\cdots,{x^*}^\top B_n)^\top,\\
\mathcal{D}_2(x,x^*)&:=-\beta_1\diag(Ax)+\beta_2(I_n-\diag(x^*))(x^\top B_1^\top,\cdots,x^\top B_n^\top)^\top-\beta_2\diag(x^\top B_1x,\cdots,x^\top B_n x).
\end{align*}
Now, 
from the equilibrium equation $\vect{0}_n=f(x^*)$, we notice that $\mathcal{D}_1(x,x^*)x^*=\vect{0}_n$. 
%the %equilibrium 
%equation $\vect{0}_n=f(x^*)$ implies $\mathcal{D}_1(x,x^*)x^*=\vect{0}_n$. 
Since $x\in Y$, notice that $-\diag(Ax)x^*\leq -\diag(Ac)x^*$ and $-\diag(x^\top B_1x,\cdots,x^\top B_n x)x^*\leq -\diag(c^\top B_1c,\cdots,c^\top B_n c)x^*\leq\vect{0}_n$. Using these results, we obtain
\begin{align*}
\mathcal{D}_2(x,x^*)x^*&\leq-\beta_1\diag(Ac)x^*+\beta_2(I-\diag(x^*))(x^\top B_1^\top x^*,\cdots,x^\top B_n^\top x^*)^\top\\
&\leq(-\beta_1\diag(Ac)+\beta_2(I-\diag(x^*))(\vect{1}_n^\top B_1^\top,\cdots,\vect{1}_n^\top B_n^\top)^\top)x^*.
\end{align*}
Now, since $A$ is irreducible and $c\gg\vect{0}_n$, for a fixed value of
$\beta_1>0$, there exists $\beta_2>0$ sufficiently small so that
$\mathcal{D}_2(x,x^*)x^*\leq -d x^*$ for some constant $d>0$.  Therefore, we
have shown that $\mathcal{D}(x,x^*)x^*\leq -dx^*$ for any $x\in Y$. Since
$\mathcal{D}(x,x^*)$ is Metzler (because both $\mathcal{D}_1(x,x^*)$ and
$\mathcal{D}_2(x,x^*)$ are Metzler) and $Y$ is a convex compact forward-invariant set,
we can use %the second 
expression~\eqref{eq:1-aux} along with Theorem~\ref{theorem:append}. 
Then, we conclude
that $x^*$ is the unique globally exponentially stable equilibrium point in
$Y$, and, as a consequence of statement~\ref{l-3} from
Lemma~\ref{lem:general}, it has the same property over the set
$[0,1]^n\setminus\{\vect{0}_n\}$. This finishes the proof of fact~\ref{fact-viii}.%statement~\ref{t-3}.

%To prove the last claim of the theorem, we simply notice that %from the beginning of 
The last claim of the theorem follows from 
the proof of fact~\ref{fact-ii} % statement~\ref{t-2} %that the condition 
which states that $\beta_1\rho(\Gamma^{-1}A)<1$ implies %that the equilibrium point 
$\vect{0}_n$ is locally exponentially stable, and thus we are in either the disease-free or bistable domain. 
\end{proof}

\begin{theorem}[Algorithm for computing an endemic equilibrium]
  Consider the simplicial SIS model and assume that the system parameters
  satisfy the sufficient conditions in Theorem~\ref{th:simplicial_main} for
  the system to be in either the bistable or endemic domain. Define the
  map $\map{H_+}{\realnonnegative^n}{\realnonnegative^n}$ by
  $H_+(z)=(\frac{z_1}{1+z_1},\cdots,\frac{z_n}{1+z_n})^\top$ and
  $y_0\in(0,1)^n$ by
  \begin{equation*}
    y_{0} = \begin{cases} \frac{1}{2}\ybar, \quad &\text{ for the bistable domain,}\\
      (1-\frac{1}{\rho})u, \quad &\text{ for the endemic domain,}\end{cases}
  \end{equation*}
  with $(\rho,u)$ being the dominant right eigenpair of $\beta_1\Gamma^{-1}
  A$ and $\norm{u}_{\infty}=1$.  Then the sequence
  $(y_{k})_{k\in\natural}\subset(0,1)^n$ defined by
  \begin{align*}
    &y_{k+1}=H_+\big(\beta_1\Gamma^{-1}Ay_{k}+\beta_2\Gamma^{-1}(y_{k}^\top B_1
    y_{k},\cdots,y_{k}^\top B_n y_{k})^\top\big)
  \end{align*}
  is monotonic nondecreasing and $\lim_{k\to\infty}y_{k}=x^*$ is an endemic
  equilibrium (satisfying $y_0\ll x^* \ll \vect{1}_n$).
\end{theorem}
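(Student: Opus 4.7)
Define the update map
\begin{equation*}
H(y) := H_+\bigl(\beta_1\Gamma^{-1}A\,y + \beta_2\Gamma^{-1}(y^\top B_1 y,\ldots, y^\top B_n y)^\top\bigr),
\end{equation*}
so the algorithm reads $y_{k+1}=H(y_k)$. As observed in the proof of Lemma~\ref{lem:general}\ref{l-3} (and reused in the proof of Theorem~\ref{th:simplicial_main}), a point $x^*\in[0,1)^n$ is an equilibrium of~\eqref{model_matrix} if and only if $H(x^*)=x^*$. The strategy is: (a) show $H$ is componentwise monotone on $[0,1]^n$; (b) establish the base inequality $H(y_0)\geq y_0$; (c) conclude by componentwise monotone convergence; (d) verify the strict bounds $y_0\ll x^*\ll\vect{1}_n$.

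\textbf{Monotonicity and base step.} For $\vect{0}_n\leq y\leq z$, since $A\geq 0$ and each $B_i\geq 0$, both the linear term $\beta_1\Gamma^{-1}Ay$ and each quadratic form $y^\top B_i y$ are coordinatewise nondecreasing in $y$; combined with the monotonicity of $h_+$ on $\realnonnegative$, this yields $H(y)\leq H(z)$. For the bistable case $y_0=\tfrac12\ybar$, the base inequality $H(y_0)\geq y_0$ is exactly the one derived in the proof of Theorem~\ref{th:simplicial_main}\ref{fact-iv}: using $\theta\geq 4$ and the scalar identity ($h_+(\alpha z)\geq z$ iff $z\leq 1-1/\alpha$) with $\alpha=\theta/2\geq 2$, one gets $H(y_0)\geq H_+(\tfrac{\theta}{4}\ybar)\geq\tfrac12\ybar$. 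For the endemic case $y_0=(1-1/\rho)u$, dropping the nonnegative quadratic terms and using $\beta_1\Gamma^{-1}A\,u=\rho u$ gives $H(y_0)\geq H_+((\rho-1)u)$; coordinatewise, the required inequality $h_+((\rho-1)u_i)\geq (1-1/\rho)u_i$ is algebraically equivalent to $u_i\leq 1$, which holds because $\|u\|_\infty=1$.

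\textbf{Convergence.} Induction via monotonicity promotes $y_1\geq y_0$ to $y_{k+1}\geq y_k$ for every $k$. Since the sequence is componentwise monotone and bounded above by $\vect{1}_n$, it converges to some limit $x^*\in[0,1]^n$. The range of $H_+$ lies in $[0,1)^n$, so in fact $x^*\ll\vect{1}_n$, giving the upper strict bound. Continuity of $H$ yields $x^*=H(x^*)$, so $x^*$ is an equilibrium of~\eqref{model_matrix}; since $x^*\geq y_0\neq\vect{0}_n$, Lemma~\ref{lem:general}\ref{l-3} rules out $x^*=\vect{0}_n$ and forces $x^*\gg\vect{0}_n$, making $x^*$ an endemic equilibrium.

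\textbf{Main obstacle.} The delicate point is the strict lower bound $y_0\ll x^*$. In the endemic case, the base-step inequality degenerates to equality at coordinates where $u_i=1$; to upgrade these coordinates in the limit, the plan is to apply one or more further iterations and observe that at every coordinate $j$ with $u_j<1$ we have $(y_1)_j>(y_0)_j$, after which the irreducibility of $A$ propagates strictness through the linear term $\beta_1\Gamma^{-1}Ay_k$ to every coordinate within finitely many iterations (so $(x^*)_i=\lim_k(y_k)_i>(y_0)_i$ for all $i$). In the bistable case, coordinates where $(\ybar)_i=0$ are automatic from $x^*\gg\vect{0}_n$; for coordinates with $B_i\neq 0$, the bound $x^*_i\geq\tfrac12$ from Theorem~\ref{th:simplicial_main}\ref{fact-iv} must be strengthened to $x^*_i>\tfrac12$ by combining the fixed-point equation $x^*=H(x^*)$ with the strict monotonicity of $h_+$ and an irreducibility-driven contradiction that rules out the rigid equality case. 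This propagation-of-strictness step is the main technical obstacle and is where the irreducibility hypothesis on $A$ is indispensable.
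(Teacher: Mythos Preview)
Your approach is essentially the same as the paper's: show $H$ is order-preserving, verify the base inequality $H(y_0)\geq y_0$ (by invoking the invariant sets $Y$ constructed in the proof of Theorem~\ref{th:simplicial_main}\ref{fact-iv} and~\ref{fact-vii}), and then pass to the limit of the resulting monotone bounded sequence. Your treatment is in fact more thorough than the paper's short proof, which simply asserts $H_+(f(y_0))\geq y_0$ by reference to Theorem~\ref{th:simplicial_main} and never addresses the strict lower bound $y_0\ll x^*$; the irreducibility-based propagation argument you outline for that strict inequality is the right idea and goes beyond what the paper supplies.
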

\begin{proof}
Let $f(x):=\beta_1\Gamma^{-1}Ax+\beta_2\Gamma^{-1}(x^\top B_1
x,\cdots,x^\top B_n x)^\top$ for $x\in[0,1]^n$. From the proof of
Theorem~\ref{th:simplicial_main}, % we know the existence of 
there exists 
an endemic state
$x^*$ which satisfies $H_+(f(x^*))=x^*$. Now, we also know that
$H_+(f(y_{0}))\geq y_{0}$, and so $y_{1}\geq y_{0}$. Similarly, we note
that $y_2=H_+(f(y_{1}))\geq H_+(f(y_{0})) = y_{1}$, which follows from the
entry-wise monotonicity of $H_+$ and $y_{1}\geq y_{0}$. Then, by induction,
we obtain that $y_{k+1}=H_+(f(y_{k}))\geq y_{k}$ for $k\geq 0$. Now, notice
that $y_{k}\leq \vect{1}_n$ for $k\geq 0$, which let us conclude that
$(y_i(k))_k$ is a monotonically non-decreasing bounded sequence with upper
bound $1$. Then, $\lim_{k\to\infty}y_{k}=x^*$, with $x^*$ an equilibrium
point of the system in $Y$ and away from $\vect{1}_n$ due to
Lemma~\ref{lem:general}.
\end{proof}

\section{Analysis of higher-order models}

We extend the simplicial SIS model to the setting of multiple arbitrary
high-order interactions.

\begin{definition}[The general higher-order SIS model]
\label{def:simplicial_model_gen}
Assume $x\in[0,1]^n$, and let $\beta_1,\cdots,\beta_{n-1}>0$ and
$\gamma_i>0$, $i\in\until{n}$. Then, the \emph{general higher-order SIS
  model} is, for any $i\in\until{n}$,
%% \begin{multline}
%%   \label{model_i_gen}
%%   \dot{x}_i=-\gamma_i x_i+\beta_1(1-x_i)\sum_{j=1}^n a_{ij}x_j\\
%%   +(1-x_i)\sum_{k=2}^{n-1}\beta_k\big(\sum_{i_1,\dots,i_k=1}^n b_{ii_1\cdots i_k} \prod_{\ell=1}^kx_{i_\ell}\big)
%% \end{multline}
\begin{equation*}
  %% \label{model_i_gen}
  \dot{x}_i=-\gamma_i x_i+\beta_1(1-x_i)\sum_{j=1}^n a_{ij}x_j+(1-x_i)\sum_{k=2}^{n-1}\beta_k \! \sum_{i_1,\dots,i_k=1}^n \!\! b_{ii_1\cdots i_k} x_{i_1}\cdots x_{i_k},
\end{equation*}
where $b_{ii_1\cdots i_k}\geq0$ for any $i\in\until{n}$ and
$k\in\{2,\cdots,n-1\}$, and $A=(a_{ij})$ is an arbitrary nonnegative matrix.
\end{definition}

We believe it is straightforward to extend the analysis of the simplicial
SIS model in Lemma~\ref{lem:general} to the general higher-order SIS model
in this definition. The reason is that the Lemma~\ref{lem:general}'s proof essentially depends 
on matrix $A$ and so is independent of 
%the order of the higher interactions; 
any higher-order interaction; 
therefore,
we omit it here in the interest of brevity. Similarly,
under appropriate changes on the sufficient conditions that define each
behavioral domain, parallel results to Theorem~\ref{th:simplicial_main}
can %also 
be obtained. In the interest of brevity, we only focus %our discussion
on proving that a bistable domain also exists for arbitrary
higher-order interactions. 
For convenience, define the shorthand:
$$b_i^*:=\sum_{k=2}^{n-1}\beta_k\Big(\sum_{i_1,\dots,i_k=1}^nb_{ii_1\cdots i_k}\Big).$$

\begin{proposition}[The general higher-order SIS model and its different epidemiological domains]
\label{prop:bistable-g}
Consider the general higher-order SIS model (Definion~VI.1) with an irreducible $A\ge0$ and arbitrary $b_{ii_1\cdots i_k}\geq0$ for any $i\in\until{n}$ and $k\in\{2\cdots,n-1\}$. Define $\ybard\in\{0,1\}^n$ by $(\ybard)_i=1$ if
$b_i^*>0$ and $(\ybard)_i=0$ otherwise.
\begin{description}
\item[Disease-free domain:] If %$\beta_1\rho(\Gamma^{-1}A)<1$ and
  \begin{equation*}
    \rho\Bigg(\beta_1\Gamma^{-1}A+\Gamma^{-1}\sum^{n-1}_{k=2}\beta_k\hat{\mathcal{B}}_k\Bigg)<1,    
%    \quad\text{with}\quad \hat{\mathcal{B}}_k\in\R^{n\times n}\quad\text{and its }ij\text{ entry }\hat{\mathcal{B}}_{k,ij}=\sum^n_{\ell_2\dots\ell_k=1}b_{ij\ell_2\dots\ell_k}
%    
%    (\vect{1}_n^\top
%  B_1,\cdots,\vect{1}_n^\top B_n)^\top)<1,
  \end{equation*}
with $\hat{\mathcal{B}}_k\in\R^{n\times n}$ and its $ij$ entry being $\hat{\mathcal{B}}_{k,ij}=\sum^n_{\ell_2\dots\ell_k=1}b_{ij\ell_2\dots\ell_k}$. Then,  
  \begin{enumerate}[label=(\roman*)]
\item $\vect{0}_n$ is the unique equilibrium point in $[0,1]^n$,
\item $\vect{0}_n$ is globally exponentially stable in $[0,1]^n$ with
  Lyapunov function $V(x)=\norm{x}_{1,\diag(v)\Gamma^{-1}}=v^\top\Gamma^{-1}x$, where $v$ is the
  dominant left eigenvector of
  $\beta_1\Gamma^{-1}A+\Gamma^{-1}\sum^{n-1}_{k=2}\beta_k\hat{\mathcal{B}}_k$.
\setcounter{saveenum}{\value{enumi}}  
\end{enumerate}
\item[Bistable domain:] If $\beta_1\rho(\Gamma^{-1}A)<1$ and
%  \begin{equation*}
%    \min_{i \text{ s.t. }B_i\neq\vect{0}_{n\times n}}
%    \Big(\frac{2\beta_1}{\gamma_i}(A\ybar)_i  + \frac{\beta_2}{\gamma_i}
%    \ybar^\top B_i\ybar\Big)\geq 4,
%  \end{equation*}
\begin{equation*}
  \min_{i \text{ s.t. }b_i^*\neq 0} \Bigg(\frac{\beta_1}{\gamma_i}(A\ybard)_i 
  +  \sum_{k=2}^{n-1}\frac{\beta_k}{\gamma_i}\Big(\frac{n-2}{n-1}\Big)^{k-1}
  \sum_{i_1,\dots,i_k=1}^n \!\!  b_{ii_1\cdots i_k}\prod_{\ell=1}^k(\ybard)_{i_\ell} \Bigg) \geq n-1,
\end{equation*}
%  
%\begin{multline*}
%  \min_{i \text{ s.t. }b_i^*\neq 0} \Bigg(\frac{\beta_1}{\gamma_i}(A\ybard)_i 
%  +  \sum_{k=2}^{n-1}\frac{\beta_k}{\gamma_i}\Big(\frac{n-2}{n-1}\Big)^{k-1} \\
%  \times \sum_{i_1,\dots,i_k=1}^n \!\!  b_{ii_1\cdots i_k}\prod_{\ell=1}^k(\ybard)_{i_\ell} \Bigg) \geq n-1,
%\end{multline*}  
%
  then
\begin{enumerate}[label=(\roman*)]\setcounter{enumi}{\value{saveenum}}
\item\label{fact2-i} $\vect{0}_n$ is a locally exponentially stable equilibrium,
\item\label{fact2-ii} there exists an equilibrium point $x^*\gg\vect{0}_n$ such that
  $x^*_i\geq\frac{n-2}{n-1}$ for any $i$ such that $b_i^*\neq 0$,
  and
\item\label{fact2-iii} any such equilibrium point $x^*$ is locally exponentially stable.
    \setcounter{saveenum}{\value{enumi}}
\end{enumerate}
\item[Endemic domain:] If $\beta_1\rho(\Gamma^{-1}A)>1$, then
\begin{enumerate}[label=(\roman*)]\setcounter{enumi}{\value{saveenum}}
\item $\vect{0}_n$ is an unstable equilibrium,
\item there exists an equilibrium point $x^*\gg \vect{0}_n$ in $[0,1]^n$, and
\item if $\beta_2,\cdots,\beta_{n-1}$ is sufficiently small, then $x^*$ is unique in $(0,1]^n$
  and it is globally exponentially stable in
  $[0,1]^n\setminus\{\vect{0}_n\}$, with Lyapunov function
    $V(x)=\norm{x-x^*}_{\infty,\diag(x^*)^{-1}}$, $x\in\mathcal{X}$.
\end{enumerate}
\end{description}
Moreover, if $\beta_1\rho(\Gamma^{-1}A)<1$, then the system is either in the disease-free domain or in the bistable domain.
\end{proposition}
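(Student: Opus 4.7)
The plan is to mirror the proof of Theorem~\ref{th:simplicial_main} term by term, replacing each quadratic form $x^\top B_i x$ by the $k$-linear form $S_{k,i}(x):=\sum_{i_1,\dots,i_k=1}^n b_{i\,i_1\cdots i_k}x_{i_1}\cdots x_{i_k}$ and summing over $k\in\{2,\dots,n-1\}$ with weights $\beta_k$. Because the pairwise part and the linearization at $\vect{0}_n$ are identical to the simplicial case, local exponential stability of $\vect{0}_n$ in the bistable domain (fact~\ref{fact2-i}) and instability of $\vect{0}_n$ in the endemic domain carry over verbatim from facts~\ref{fact-iii} and~\ref{fact-vi} of Theorem~\ref{th:simplicial_main}. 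As before, equilibria are exactly the fixed points in $[0,1]^n$ of the map
\[
H(x):=H_+\!\Big(\beta_1\Gamma^{-1}Ax+\Gamma^{-1}\sum_{k=2}^{n-1}\beta_k\,\big(S_{k,1}(x),\dots,S_{k,n}(x)\big)^{\!\top}\Big).
\]

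For the disease-free domain, I would bound each monomial using $x_{i_\ell}\le 1$ for $\ell\ge 2$, which yields $S_{k,i}(x)\le\sum_{j=1}^n \hat{\mathcal{B}}_{k,ij}\,x_j$ and therefore controls the entire higher-order contribution by $\Gamma^{-1}\sum_k\beta_k\hat{\mathcal{B}}_k\,x$. The Sandwich-theorem iteration of fact~\ref{fact-i} then delivers uniqueness of $\vect{0}_n$ as an equilibrium, and the weighted $\ell_1$ Lyapunov function $V(x)=v^\top\Gamma^{-1}x$, with $v$ the dominant left eigenvector of $\beta_1\Gamma^{-1}A+\Gamma^{-1}\sum_k\beta_k\hat{\mathcal{B}}_k$, gives global exponential stability exactly as in fact~\ref{fact-ii}.

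The hardest step is the bistable existence claim~\ref{fact2-ii}, which I would establish by Brouwer on $H\colon Y\to Y$ with $Y:=\setdef{y\in[0,1]^n}{\tfrac{n-2}{n-1}\ybard\le y\le\vect{1}_n}$. The key lower bound on $Y$ is
\[
S_{k,i}(y)\;\ge\;\Big(\tfrac{n-2}{n-1}\Big)^{\!k}\sum_{i_1,\dots,i_k=1}^n b_{i\,i_1\cdots i_k}\prod_{\ell=1}^k(\ybard)_{i_\ell},
\]
obtained by discarding every summand with some $(\ybard)_{i_\ell}=0$. Factoring out one power of $\tfrac{n-2}{n-1}$ and invoking the bistable hypothesis ensures that, for every $i$ with $b_i^*\neq 0$, the $i$th entry of the argument of $H_+$ is at least $\tfrac{n-2}{n-1}(n-1)=n-2$; since $h_+(n-2)=\tfrac{n-2}{n-1}$ and $h_+$ is monotone, this gives $H(Y)\subseteq Y$ and Brouwer produces $x^*\in Y$, which statement~\ref{l-3} of Lemma~\ref{lem:general} upgrades to $x^*\gg\vect{0}_n$. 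For the local exponential stability in fact~\ref{fact2-iii}, I would compute $Df(x^*)$ and, using that each $S_{k,i}$ is homogeneous of degree $k$ (so Euler's identity yields $\sum_j x_j^*\partial_j S_{k,i}(x^*)=k\,S_{k,i}(x^*)$) together with the equilibrium equation, reduce
\[
(Df(x^*)x^*)_i=\sum_{k=2}^{n-1}\beta_k S_{k,i}(x^*)\big[(k-1)-kx_i^*\big]-\beta_1 x_i^*(Ax^*)_i.
\]
The bound $x_i^*\ge\tfrac{n-2}{n-1}$ for every $i$ with $b_i^*\ne 0$ (and $S_{k,i}(x^*)=0$ for the remaining $i$) forces $(k-1)-kx_i^*\le 0$ for all $k\in\{2,\dots,n-1\}$; this is precisely what dictates the threshold $\tfrac{n-2}{n-1}$ in $Y$. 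Irreducibility of $A$ then gives $Df(x^*)x^*\le -dx^*$ for some $d>0$ and \cite[Theorem~15.17]{FB:20} delivers Hurwitzness since $Df(x^*)$ is Metzler.

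The endemic-domain facts follow by the three-step approach of Theorem~\ref{th:simplicial_main}: Perron--Frobenius gives instability of $\vect{0}_n$; Brouwer on $\setdef{y}{c\le y\le\vect{1}_n}$ with $c=\alpha v$ for $\alpha>0$ small produces an endemic state; and the decomposition $\dot{x}=\mathcal{D}(x,x^*)(x-x^*)$ used together with Theorem~\ref{theorem:append} yields global exponential stability. The only extra bookkeeping is that $\mathcal{D}_2(x,x^*)$ now carries one additional multilinear term per order $k\ge 3$, each bounded on the forward-invariant set; taking $\beta_2,\dots,\beta_{n-1}$ sufficiently small still forces $\mathcal{D}(x,x^*)x^*\le -dx^*$, at which point characterization~\eqref{eq:1-aux} closes the argument. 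The final claim is immediate: $\beta_1\rho(\Gamma^{-1}A)<1$ gives local exponential stability of $\vect{0}_n$ by the argument for fact~\ref{fact2-i}, which precludes the endemic domain.
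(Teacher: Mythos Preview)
Your proposal is correct and follows essentially the same route as the paper's proof: the paper only spells out the bistable domain and defers the rest to Theorem~\ref{th:simplicial_main}, and your Brouwer argument on $Y=\setdef{y}{\tfrac{n-2}{n-1}\ybard\le y\le\vect{1}_n}$, the lower bound via $y\ge\tfrac{n-2}{n-1}\ybard$, and the Jacobian computation leading to $(Df(x^*)x^*)_i=-\beta_1 x_i^*(Ax^*)_i+\sum_k\beta_k S_{k,i}(x^*)\big[(k-1)-kx_i^*\big]$ match the paper's argument line for line. Your explicit invocation of Euler's identity for the homogeneous forms $S_{k,i}$ is a nice clarification of what the paper hides under ``after some algebraic work.''
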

%
%\begin{proposition}[Bistable domain in higher-order interactions]
%\label{prop:bistable-g}
%Consider the general higher-order SIS model with an irreducible $A\ge0$ and arbitrary
%$b_{ii_1\cdots i_k}\geq0$ for any $i\in\until{n}$ and $k\in\{2\cdots,n-1\}$. Define $\ybard\in\{0,1\}^n$ by $(\ybard)_i=1$ if
%$b_i^*>0$ and $(\ybard)_i=0$ otherwise.
%If $\beta_1\rho(\Gamma^{-1}A)<1$ and
%\begin{multline*}
%  \min_{i \text{ s.t. }b_i^*\neq 0} \Bigg(\frac{\beta_1}{\gamma_i}(A\ybard)_i 
%  +  \sum_{k=2}^{n-1}\frac{\beta_k}{\gamma_i}\Big(\frac{n-2}{n-1}\Big)^{k-1} \\
%  \times \sum_{i_1,\dots,i_k=1}^n \!\!  b_{ii_1\cdots i_k}\prod_{\ell=1}^k(\ybard)_{i_\ell} \Bigg) \geq n-1,
%\end{multline*}
%then
%\begin{enumerate}
%\item\label{fact2-i} $\vect{0}_n$ is a locally exponentially stable equilibrium,
%\item\label{fact2-ii} there exists an equilibrium point $x^*\gg\vect{0}_n$ such that
%  $x^*_i\geq\frac{n-2}{n-1}$ for any $i$ such that $b_i^*\neq 0$,
%  and
%\item\label{fact2-iii} any such equilibrium point $x^*$ is locally exponentially stable.
%\end{enumerate}
%\end{proposition}
%
\begin{proof}
  %Let us 
We only prove the results for the bistable domain. Consider the functions $H_+$ and $h_+$ introduced in the proof of
  Lemma~\ref{lem:general}. Let $\bar{A}:=\beta_1\Gamma^{-1}A$.  
  The proof for fact~\ref{fact2-i}   
%  The proof
%  for the local exponential stability of the origin 
  is %exactly 
  the same as
  in Theorem~\ref{th:simplicial_main}. 
  %, so we focus on proving the other
  %points of the proposition.
  Now, we prove fact~\ref{fact2-ii}.     
  Define $Y=\setdef{y\in[0,1]^n}{\frac{n-2}{n-1}\ybard\leq y \leq
    \vect{1}_n}$.  Rewrite the second inequality assumption in the proposition %theorem
  statement as $\theta\geq n-1$, where $\theta$ is a shorthand for the
  minimum term.  For a point $y\in Y$, we compute
  \begin{align}
    (H_+(y))_i \nonumber&=h_+ \Big( (\bar{A}y)_i+\sum_{k=2}^{n-1}\frac{\beta_k}{\gamma_i} \sum_{i_1,\dots,i_k=1}^n
    b_{ii_1\cdots i_k}\prod_{\ell=1}^ky_{i_\ell} \Big) \nonumber \\
    &\quad \geq h_+\Bigg(\frac{n-2}{n-1}((\bar{A}\ybar)_i +
    \sum_{k=2}^{n-1}\frac{\beta_k}{\gamma_i}
    \Big(\frac{n-2}{n-1}\Big)^{k-1} \sum_{i_1,\dots,i_k=1}^nb_{ii_1\cdots i_k}\prod_{\ell=1}^k(\ybard)_{i_\ell} )\Bigg),     \label{eq:aux_H_gen}
  \end{align}
  where the inequality follows from the monotonicity of the function $h_+$.
  Whenever $b_i^*\neq 0$, we can lower bound the expression
  in~\eqref{eq:aux_H_gen} by $h_+(\frac{n-2}{n-1}\theta)$; and whenever
  $b_i^*=0$, we can lower bound it by $h_+(0)=0$. Therefore, as in the
  proof of Theorem~\ref{th:simplicial_main}, we obtain
  $$H(y)\geq H_+ \Big(\frac{n-2}{n-1}\theta\ybard\Big)\geq
  \frac{n-2}{n-1}\ybard.$$ Then, following the proof for the bistable domain %of statement~\ref{t-2}
  of Theorem~\ref{th:simplicial_main}, we obtain that there exists an
  equilibrium point $y^*\in Y$ such that $y\gg\vect{0}_n$.

  Now we prove fact~\ref{fact2-iii}. Let $x^*\gg\vect{0}_n$ be an equilibrium satisfying $x^*\geq
  \frac{n-2}{n-1}\ybard$. Evaluating the Jacobian of the system at $x^*$,
  namely $Df(x^*)$, and after some algebraic work (similar to the one done
  in the proof of Theorem~\ref{th:simplicial_main}), we observe that
  $Df(x^*)$ is a Metzler matrix and, moreover, that
  \begin{equation*}
    \begin{aligned}
      &(Df(x^*)x^*)_i=-\beta_1(Ax^*)_ix^*_i+\sum_{k=2}^{n-1}\big((k-1)-kx^*_i\big)\beta_k\big(\sum_{i_1,\dots,i_k=1}^n
      b_{ii_1\cdots i_k}\prod_{\ell=1}^kx_{i_\ell}\big).
    \end{aligned}
  \end{equation*}
  First, if $b_i^*\neq0$, then $x^*_i\geq\frac{n-2}{n-1}$ and
  $(k-1-kx^*_i)\leq 0$, since $\frac{k-1}{k}\leq\frac{n-2}{n-1}\leq x^*$
  for $k\in\{2,\cdots,n-1\}$. In turn,% we obtain
  \begin{equation*}
    \begin{aligned}
      (Df(x^*)x^*)_i
      \leq -\left(\beta_1\min_{s}
      \left(\sum_{j=1}^n a_{sj}x^*_j\right)\right)x_i^*.
    \end{aligned}
  \end{equation*}
  On the other hand, if $b_i^*=0$, then
  $(Df(x^*)x^*)_i=-\beta_1\left(\sum_{j=1}^n a_{ij}x^*_j\right)x^*_i$.
  Therefore, from these two cases and recalling that $A$ is irreducible, we
  have $Df(x^*)x^*\leq -dx^*$ for some $d>0$. Finally, since
  $x^*\gg\vect{0}_n$, \cite[Theorem 15.17]{FB:20} implies that $Df(x^*)$ is
  Hurwitz and, therefore, $x^*$ is locally exponentially stable. 
  %This   concludes the proof.
\end{proof}

\section{Numerical example}
\label{sec:numeric}
In Figure~\ref{f:sim1}, we present two numerical examples of the behavior
of the simplicial SIS model. First, we verify the existence of a parameter
region under which the sufficient conditions of
Theorem~\ref{th:simplicial_main} cannot be applied. We can readily observe
the transition from the disease-free domain to the bistable domain as we
increase $\beta_2$ for a fixed $\beta_1$, as mentioned in
Remark~\ref{rem-main-th}. Also, notice that the sufficient condition for
determining the endemic domain in Theorem~\ref{th:simplicial_main} is
tight. We also remark that the sufficient condition for determining the
bistable region captures most of the true parameter region in these
simulations.

From our numerical simulations we propose the following conjectures, which
are consistent with the behavior observed in the scalar model.
\begin{conjectures}[Behaviors in the bistable and endemic domains]
  For the simplicial SIS model,
  \begin{enumerate}
  \item in the bistable domain, at fixed $\beta_2$, the domain of
    attraction of the disease-free equilibrium $x^*=\vect{0}_n$ decreases
    as $\beta_1$ increases. Once $\beta_1=\frac{1}{\rho(\Gamma^{-1}A)}$, a
    bifurcation occurs and the origin becomes an unstable equilibrium point
    in the endemic domain;
    
  \item in the endemic domain, the endemic equilibrium is unique and
    globally stable for any value of $\beta_2$.
\end{enumerate}
\end{conjectures}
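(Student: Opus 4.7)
For the first conjecture, my plan is to exploit the cooperative structure of the simplicial SIS dynamics on $[0,1]^n$. Writing $f_{\beta_1}$ to emphasize dependence on the pairwise rate, one checks directly from~\eqref{model_matrix} that $Df(x)$ is Metzler on $[0,1]^n$ (since $A\geq0$, $B_i\geq0$, and $x_i\in[0,1]$), so the flow is monotone in the component-wise order. Moreover $\partial f_{\beta_1}(x)/\partial\beta_1 = (I_n-\diag(x))Ax\geq\vect{0}_n$ on $[0,1]^n$, so $f_{\beta_1}(x)\leq f_{\beta_1'}(x)$ entrywise whenever $\beta_1\leq\beta_1'$. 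By Kamke's comparison theorem, the trajectories satisfy $\vect{0}_n\leq\phi^{\beta_1}_t(x_0)\leq\phi^{\beta_1'}_t(x_0)$ for every $x_0\in[0,1]^n$ and $t\geq0$. Hence if $x_0$ lies in the basin of $\vect{0}_n$ under $\beta_1'$, squeezing forces $\phi^{\beta_1}_t(x_0)\to\vect{0}_n$, which shows the basin at $\beta_1'$ is contained in that at $\beta_1$. The bifurcation at $\beta_1=1/\rho(\Gamma^{-1}A)$ is then the standard transcritical bifurcation of fact~\ref{fact-vi}: the origin's Jacobian $-\Gamma+\beta_1 A$ is Metzler and, by~\cite[Theorem~15.17]{FB:20}, its dominant eigenvalue crosses zero exactly at the critical value. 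Combined with Lyapunov--Perron center-manifold reduction along the Perron direction $v$, one obtains an exchange of stability with a branch of equilibria inside $(0,1]^n$, matching Theorem~\ref{th:simplicial_main}.

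For the second conjecture, the plan has two steps: uniqueness of the endemic equilibrium and global attractivity. For attractivity, I would use Hirsch's generic convergence theorem for strongly monotone cooperative flows: Lemma~\ref{lem:general}(ii) shows every trajectory from $(0,1]^n\setminus\{\vect{0}_n\}$ enters the strict positive cone and stays bounded in $[0,1]^n$; combined with the fact that $\vect{0}_n$ is repelling in the endemic regime (fact~\ref{fact-vi}) and that equilibria on the boundary are excluded (Lemma~\ref{lem:general}(iii)), every bounded omega-limit set must lie in the interior and consist of equilibria; if uniqueness of the endemic equilibrium $x^*$ is established, this forces $\omega(x_0)=\{x^*\}$ everywhere in $(0,1]^n$. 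For uniqueness, I would try to run the monotone iteration of the algorithm theorem from two directions: from below with $y_0$ as defined there, which produces an increasing sequence converging to some $x^*_{\min}\gg\vect{0}_n$; and from above with $z_0=\vect{1}_n$, which under the $H$-map is immediately decreasing and produces a limit $x^*_{\max}$. Any endemic equilibrium must satisfy $x^*_{\min}\leq x^*\leq x^*_{\max}$, so uniqueness reduces to showing $x^*_{\min}=x^*_{\max}$. To close that gap I would combine the fact that every endemic equilibrium has a Hurwitz Jacobian (extending fact~\ref{fact-v} by the same cooperative/Perron argument) with a fixed-point-index computation on the positive cone: each hyperbolic stable equilibrium contributes $+1$, the repelling origin contributes $0$ in the cone, and the total index must equal $+1$ by the Brouwer map degree of $H$ on $[0,1]^n$.

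The main obstacle will be uniqueness for arbitrary $\beta_2$. The classical multi-group SIS proof of uniqueness in~\cite[Theorem~4.3]{WM-SM-SZ-FB:16f} uses a sub-homogeneity/concavity property of the fixed-point map, namely $H(\lambda y)>\lambda H(y)$ for $\lambda\in(0,1)$, which fails here because $y^\top B_i y$ is genuinely quadratic and spoils concavity. Two possible workarounds: (a) restrict the sub-homogeneity argument to directions along which the cubic correction from $\beta_2$ has a sign compatible with concavity of $h_+$, showing $\lambda\mapsto H(\lambda v)/\lambda$ is still strictly decreasing on the Perron ray; or (b) use a topological/Morse-decomposition argument, leveraging that cooperative two-dimensional unstable manifolds cannot accommodate more than one stable equilibrium in the interior without producing an additional unstable equilibrium whose existence could be ruled out by Jacobian analysis as in fact~\ref{fact-v}. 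Resolving this sub-homogeneity failure is, I expect, the technical crux and the reason the authors pose it as a conjecture rather than a theorem.
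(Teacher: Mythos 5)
First, be aware that the paper offers no proof of this statement: it is posed explicitly as a pair of conjectures supported only by numerical simulations, so there is no authorial proof to compare your route against. Judged on its own merits, your proposal contains one genuinely sound piece and two genuine gaps. The sound piece is the basin-monotonicity argument for conjecture (i): the Jacobian of the vector field is indeed Metzler on $[0,1]^n$, the vector field is entrywise nondecreasing in $\beta_1$, and the Kamke comparison $\vect{0}_n\leq\phi^{\beta_1}_t(x_0)\leq\phi^{\beta_1'}_t(x_0)$ does give nestedness of the basins of the origin. That is a correct and essentially complete proof of the first half of conjecture (i) (with ``decreases'' read as set inclusion), and it goes beyond what the paper establishes. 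The bifurcation half, however, is only gestured at: the eigenvalue crossing at $\beta_1=1/\rho(\Gamma^{-1}A)$ follows from the Metzler/Perron argument already used for fact~\ref{fact-vi}, but calling it a transcritical bifurcation with exchange of stability requires a center-manifold reduction and a non-degeneracy check on the quadratic coefficient along the Perron direction, which you do not carry out; moreover the conjecture implicitly asserts that an endemic branch persists down to the bifurcation point, which your sketch does not address.

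The second gap is the decisive one, and you correctly identify it yourself: uniqueness of the endemic equilibrium for arbitrary $\beta_2$. The sub-homogeneity property that drives uniqueness in the classical multi-group SIS model fails because $y^\top B_i y$ is genuinely quadratic, and neither of your proposed workarounds is more than a hope: restricting sub-homogeneity to the Perron ray does not control equilibria off that ray, and the fixed-point-index count requires hyperbolicity (indeed a Hurwitz Jacobian) at \emph{every} interior equilibrium, which fact~\ref{fact-v} delivers only under the extra hypothesis $x^*_i\geq\tfrac{1}{2}$ on the support of the $B_i$ — a condition an arbitrary endemic equilibrium need not satisfy. In addition, your attractivity step overstates what Hirsch's theorem gives: generic convergence to equilibria is not convergence of every trajectory, and the claim that every omega-limit set consists of equilibria does not follow from strong monotonicity alone. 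The cleaner route (which your two-sided iteration already suggests) is to squeeze every trajectory between the monotone orbits issued from a sub-equilibrium and a super-equilibrium and conclude from $x^*_{\min}=x^*_{\max}$ — but that again reduces to the unproved uniqueness. So your proposal genuinely settles the basin-nestedness claim, and leaves both the bifurcation structure and conjecture (ii) open, exactly where the authors left them.
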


%
%\fbtodo{(1) redraw this figure with $\beta_1$ reaching the value $0.9$ or
%  so, in such a way that the endemic domain becomes larger and easier to
%  read the label. (2) maybe run 2 simulations, one with only one $B_i\neq0$
%  and the other with a lot of $B_i$s}

\begin{figure}[ht]
  \centering
  \includegraphics[width=0.6\linewidth]{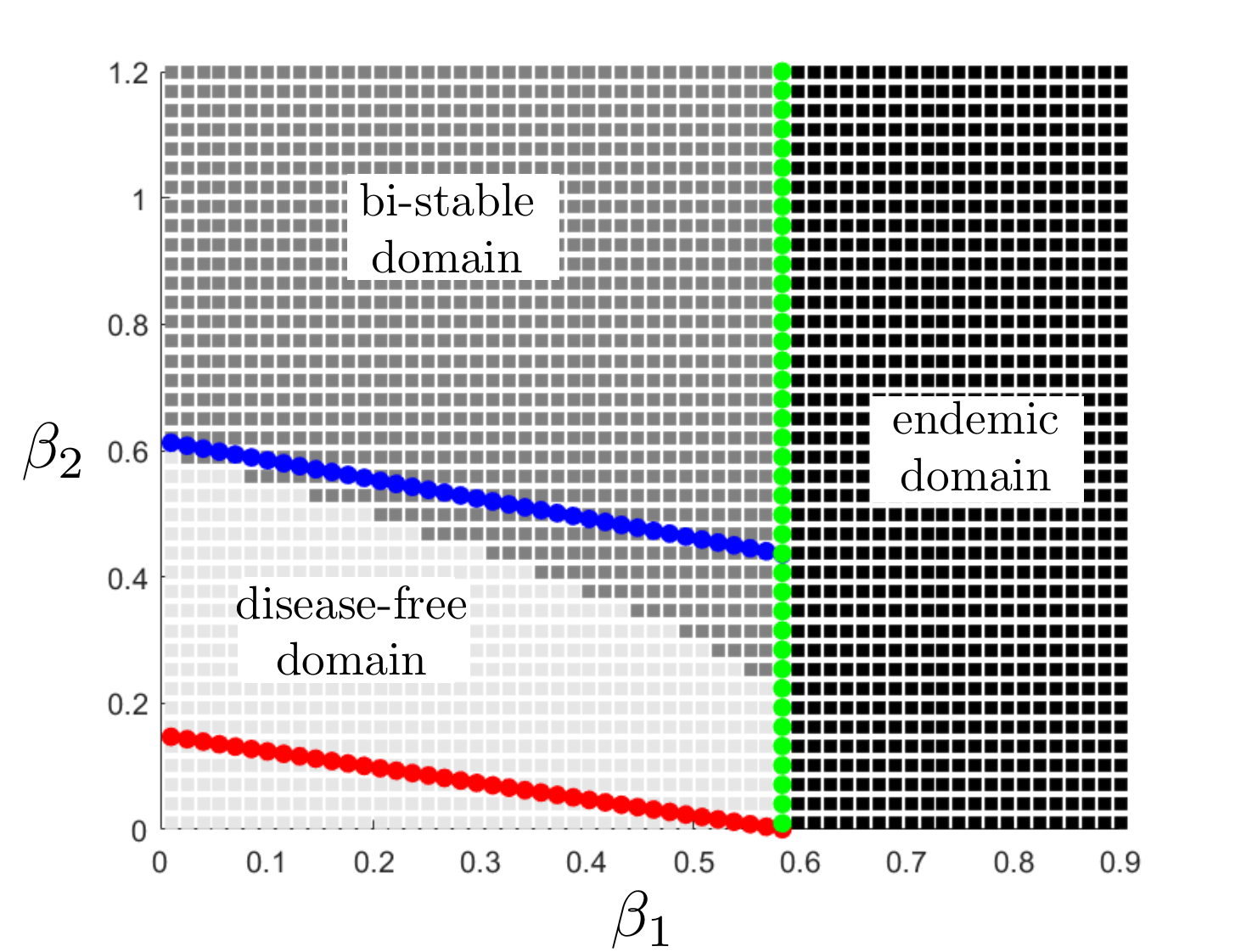}
  \includegraphics[width=0.6\linewidth]{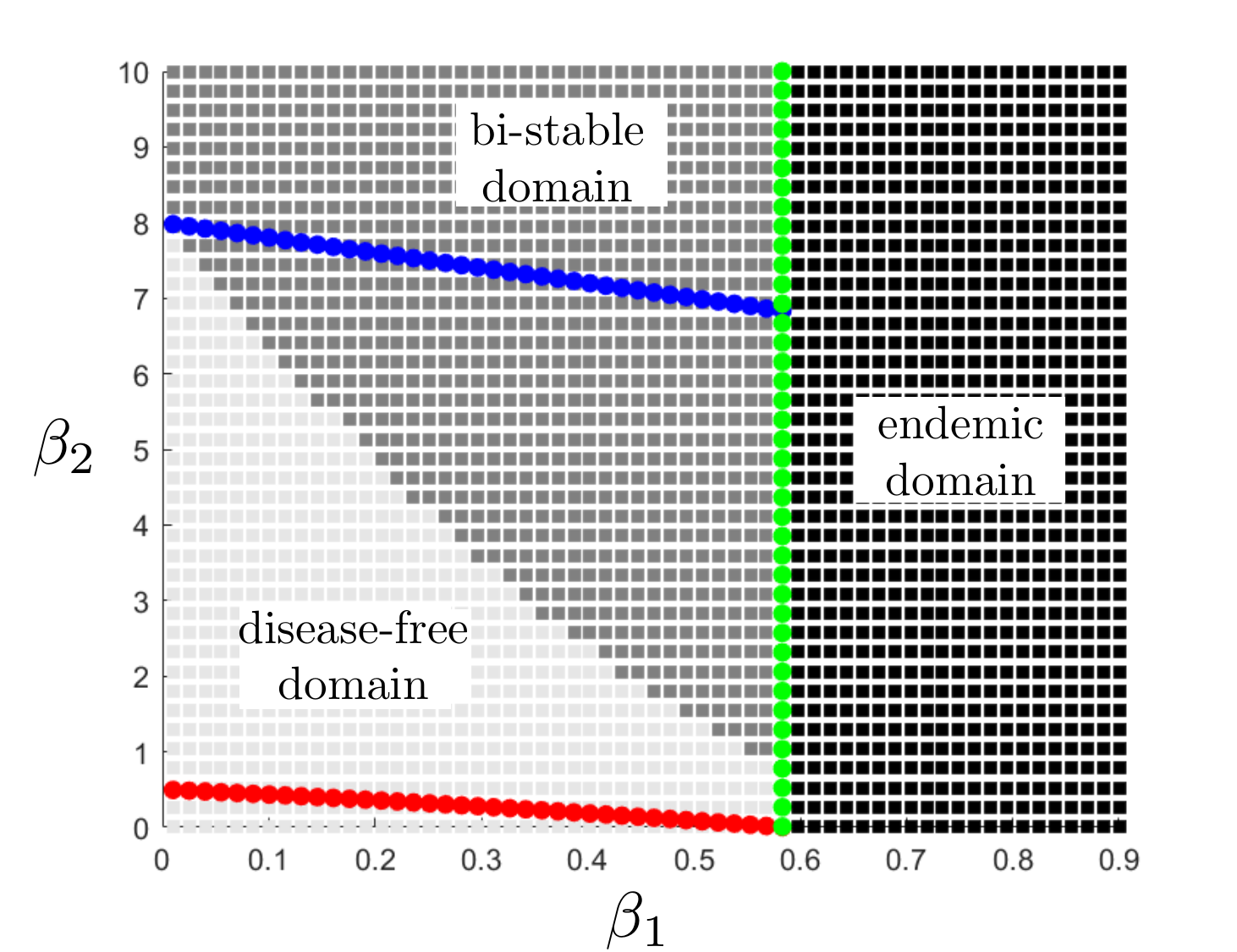}
  \caption{Consider the simplicial SIS model with its parameters
    (Definition~\ref{def:simplicial_model}). For the upper figure, we
    randomly generated and fixed six irreducible matrices
    $A\in\{0,1\}^{5\times{5}}$ and $B_i\in\{0,1\}^{5\times{5}}$,
    $B_i\neq\vect{0}_n$, $i\in\until{5}$; and fixed $\Gamma=2I_5$.  The
    light-gray/gray/black region corresponds to the
    disease-free/bistable/endemic domain from the simulation.  Regarding the
    sufficient conditions established by Theorem~\ref{th:simplicial_main},
    all the region to the right of the green line correspond to the endemic
    domain, all the region above the blue line and left to the green line
    to the bistable domain, and all the region below the red curve to the
    disease-free domain.  For the lower figure, we considered the same
    settings as in the upper figure, but with the difference that this time
    we set $B_i=\vect{0}_{n\times{n}}$ for $i\in\{2,\cdots,5\}$.  }
  \label{f:sim1}
\end{figure}

\section{Conclusion}
\label{sec:concl} 
In this paper, we formally analyze the simplicial SIS model and establish
its different behavioral domains. As seen in a previous 
%mean-field approximation models,
scalar model, we show the existence of the bistable domain and its
possible transition from the disease-free domain by changing the model
parameters. This feature makes our model qualitatively different from the
classical multi-group SIS model. We also show that the bistable domain
exists for any multi-group SIS model with higher-order interactions.

As future work, we plan to study control strategies for the mitigation of
the epidemic %disease 
in the simplicial SIS model; 
%%\pc{i.e., how to add a control mechanism that could alter the
%  epidemic dynamics so that the proportion of infected individuals in all
%  populations converge to zero.}  
%  Particularly, \pc{it is of interest to know} 
e.g.,   
how to
drive the system to the origin whenever it is in the bistable domain. 
More
generally, we also plan to study the %effects of aggregating 
aggregation of 
higher order
interaction terms in other epidemiological models, 
%(e.g., the multi-group Susceptible-Infected-Recovered (SIR) model). 
where 
we believe our approach based
on Coppel's inequalities can also be useful. % in these applications. 
Finally, 
it is relevant to provide a more comprehensive characterization of the model parameters $\beta_1$ and $\beta_2$, and thus prove the tight transition between the disease-free and the bistable domains illustrated by our simulations.

\section*{Acknowledgment}
We thank Dr.\ Saber Jafarpour for insightful discussions about matrix
measures and Coppel's inequalities, including proving
Theorem~\ref{theorem:append} and relate it to~\cite{MV:02}. 

\bibliographystyle{plainurl+isbn}
\bibliography{alias,Main,FB}

\begin{thebibliography}{10}

\bibitem{FBa-GC-II-VL-ML-AP-JGY-GP:20}
F.~Battiston, G.~Cencetti, I.~Iacopini, V.~Latora, M.~L., A.~Patania, J.-G.
  Young, and G.~Petri.
\newblock Networks beyond pairwise interactions: {Structure} and dynamics.
\newblock {\em Physics Reports}, 874:1--92, 2020.
\newblock \href {http://dx.doi.org/10.1016/j.physrep.2020.05.004}
  {\path{doi:10.1016/j.physrep.2020.05.004}}.

\bibitem{ARB-RA-MTS-AJ-JK:18}
A.~R. Benson, R.~Abebe, M.~T. Schaub, A.~Jadbabaie, and J.~Kleinberg.
\newblock Simplicial closure and higher-order link prediction.
\newblock {\em Proceedings of the National Academy of Sciences},
  115(48):E11221--E11230, 2018.
\newblock \href {http://dx.doi.org/10.1073/pnas.1800683115}
  {\path{doi:10.1073/pnas.1800683115}}.

\bibitem{FB-SM:15}
F.~Blanchini and S.~Miani.
\newblock {\em Set-Theoretic Methods in Control}.
\newblock Springer, 2015, ISBN 9783319179322.

\bibitem{PB-ACH-MJ:04}
P.~Bonacich, A.~C. Holdren, and M.~Johnston.
\newblock Hyper-edges and multidimensional centrality.
\newblock {\em Social Networks}, 26(3):189--203, 2004.
\newblock \href {http://dx.doi.org/10.1016/j.socnet.2004.01.001}
  {\path{doi:10.1016/j.socnet.2004.01.001}}.

\bibitem{FB:20}
F.~Bullo.
\newblock {\em Lectures on Network Systems}.
\newblock Kindle Direct Publishing, {1.4} edition, July 2020, ISBN
  978-1986425643.
\newblock With contributions by J. Cort{\'e}s, F. D\"orfler, and S.
  Mart{\'\i}nez.
\newblock URL: \url{http://motion.me.ucsb.edu/book-lns}.

\bibitem{SC:19}
S.~Coogan.
\newblock A contractive approach to separable {Lyapunov} functions for monotone
  systems.
\newblock {\em Automatica}, 106:349--357, 2019.
\newblock \href {http://dx.doi.org/10.1016/j.automatica.2019.05.001}
  {\path{doi:10.1016/j.automatica.2019.05.001}}.

\bibitem{WAC:1965}
W.~A. Coppel.
\newblock {\em Stability and Asymptotic Behavior Of Differential Equations}.
\newblock Heath, 1965, ISBN 0669190187.

\bibitem{GFdA-GP-YM:20}
G.~F. {de~Arruda}, G.~Petri, and Y.~Moreno.
\newblock Social contagion models on hypergraphs.
\newblock {\em Physical Review Research}, 2, 2020.
\newblock \href {http://dx.doi.org/10.1103/PhysRevResearch.2.023032}
  {\path{doi:10.1103/PhysRevResearch.2.023032}}.

\bibitem{AdO:08}
A.~{d'Onofrio}.
\newblock A note on the global behaviour of the network-based {SIS} epidemic
  model.
\newblock {\em Nonlinear Analysis: Real World Applications}, 9(4):1567--1572,
  2008.
\newblock \href {http://dx.doi.org/10.1016/j.nonrwa.2007.04.001}
  {\path{doi:10.1016/j.nonrwa.2007.04.001}}.

\bibitem{AF-AI-GS-JJT:07}
A.~Fall, A.~Iggidr, G.~Sallet, and J.-J. Tewa.
\newblock Epidemiological models and {Lyapunov} functions.
\newblock {\em Mathematical Modelling of Natural Phenomena}, 2(1):62--68, 2007.
\newblock \href {http://dx.doi.org/10.1051/mmnp:2008011}
  {\path{doi:10.1051/mmnp:2008011}}.

\bibitem{MG:78}
M.~Granovetter.
\newblock Threshold models of collective behavior.
\newblock {\em The Americal Journal of Sociology}, 83(6):1420--1443, 1978.
\newblock \href {http://dx.doi.org/10.1086/226707} {\path{doi:10.1086/226707}}.

\bibitem{AH:02}
A.~Hatcher.
\newblock {\em Algebraic Topology}.
\newblock Cambridge University Press, 2002, ISBN 0521795400.

\bibitem{HWH:78}
H.~W. Hethcote.
\newblock An immunization model for a heterogeneous population.
\newblock {\em Theoretical Population Biology}, 14(3):338--349, 1978.
\newblock \href {http://dx.doi.org/10.1016/0040-5809(78)90011-4}
  {\path{doi:10.1016/0040-5809(78)90011-4}}.

\bibitem{HWH:00}
H.~W. Hethcote.
\newblock The mathematics of infectious diseases.
\newblock {\em SIAM Review}, 42(4):599--653, 2000.
\newblock \href {http://dx.doi.org/10.1137/S0036144500371907}
  {\path{doi:10.1137/S0036144500371907}}.

\bibitem{RAH-CRJ:12}
R.~A. Horn and C.~R. Johnson.
\newblock {\em Matrix Analysis}.
\newblock Cambridge University Press, 2nd edition, 2012, ISBN 0521548233.

\bibitem{ARH-SS:19}
A.~R. {Hota} and S.~{Sundaram}.
\newblock Game-theoretic vaccination against networked {SIS} epidemics and
  impacts of human decision-making.
\newblock {\em IEEE Transactions on Control of Network Systems},
  6(4):1461--1472, 2019.
\newblock \href {http://dx.doi.org/10.1109/TCNS.2019.2897904}
  {\path{doi:10.1109/TCNS.2019.2897904}}.

\bibitem{HH-JT-LL-JL-XU:15}
H.~Huang, J.~Tang, L.~Liu, J.~Luo, and X.~Fu.
\newblock Triadic closure pattern analysis and prediction in social networks.
\newblock {\em IEEE Transactions on Knowledge and Data Engineering},
  27(12):3374--3389, 2015.
\newblock \href {http://dx.doi.org/10.1109/TKDE.2015.2453956}
  {\path{doi:10.1109/TKDE.2015.2453956}}.

\bibitem{II-GP-AB-VL:19}
I.~Iacopini, G.~Petri, A.~Barrat, and V.~Latora.
\newblock Simplicial models of social contagion.
\newblock {\em Nature Communications}, 10(1):2485, 2019.
\newblock \href {http://dx.doi.org/10.1038/s41467-019-10431-6}
  {\path{doi:10.1038/s41467-019-10431-6}}.

\bibitem{BJ-MJ-BK:19}
B.~Jhun, M.~Jo, and B.~Kahng.
\newblock Simplicial {SIS} model in scale-free uniform hypergraph.
\newblock {\em Journal of Statistical Mechanics: Theory and Experiment}, 2019.
\newblock \href {http://dx.doi.org/10.1088/1742-5468/ab5367}
  {\path{doi:10.1088/1742-5468/ab5367}}.

\bibitem{KFK-LS-DCS-MM:13}
K.~F. Kee, L.~Sparks, D.~C. Struppa, and M.~Mannucci.
\newblock Social groups, social media, and higher dimensional social
  structures: {A} simplicial model of social aggregation for computational
  communication research.
\newblock {\em Communication Quarterly}, 61(1):35--58, 2013.
\newblock \href {http://dx.doi.org/10.1080/01463373.2012.719566}
  {\path{doi:10.1080/01463373.2012.719566}}.

\bibitem{HKK:02}
H.~K. Khalil.
\newblock {\em Nonlinear Systems}.
\newblock Prentice Hall, 3 edition, 2002, ISBN 0130673897.

\bibitem{AK-TB-BG:16}
A.~Khanafer, T.~Ba{\c s}ar, and B.~Gharesifard.
\newblock Stability of epidemic models over directed graphs: {A} positive
  systems approach.
\newblock {\em Automatica}, 74:126--134, 2016.
\newblock \href {http://dx.doi.org/10.1016/j.automatica.2016.07.037}
  {\path{doi:10.1016/j.automatica.2016.07.037}}.

\bibitem{AL-JAY:76}
A.~Lajmanovich and J.~A. Yorke.
\newblock A deterministic model for gonorrhea in a nonhomogeneous population.
\newblock {\em Mathematical Biosciences}, 28(3):221--236, 1976.
\newblock \href {http://dx.doi.org/10.1016/0025-5564(76)90125-5}
  {\path{doi:10.1016/0025-5564(76)90125-5}}.

\bibitem{WML-HWH-SAL:87}
W.-M. Liu, H.~W. Hethcote, and S.~A. Levin.
\newblock Dynamical behavior of epidemiological models with nonlinear incidence
  rates.
\newblock {\em Journal of Mathematical Biology}, 25(4):359--380, 1987.
\newblock \href {http://dx.doi.org/10.1007/BF00277162}
  {\path{doi:10.1007/BF00277162}}.

\bibitem{MM:15}
M.~Martcheva.
\newblock {\em An Introduction to Mathematical Epidemiology}.
\newblock Springer, 2015, ISBN 978-1-4899-7611-6.

\bibitem{JTM-SG-AA:20}
J.~T. Matamalas, S.~G\'omez, and A.~Arenas.
\newblock Abrupt phase transition of epidemic spreading in simplicial
  complexes.
\newblock {\em Physical Review Research}, 2:012049, 2020.
\newblock \href {http://dx.doi.org/10.1103/PhysRevResearch.2.012049}
  {\path{doi:10.1103/PhysRevResearch.2.012049}}.

\bibitem{WM-SM-SZ-FB:16f}
W.~Mei, S.~Mohagheghi, S.~Zampieri, and F.~Bullo.
\newblock On the dynamics of deterministic epidemic propagation over networks.
\newblock {\em Annual Reviews in Control}, 44:116--128, 2017.
\newblock \href {http://dx.doi.org/10.1016/j.arcontrol.2017.09.002}
  {\path{doi:10.1016/j.arcontrol.2017.09.002}}.

\bibitem{CN-VMP-GJP:16}
C.~Nowzari, V.~M. Preciado, and G.~J. Pappas.
\newblock Analysis and control of epidemics: {A} survey of spreading processes
  on complex networks.
\newblock {\em {IEEE} Control Systems}, 36(1):26--46, 2016.
\newblock \href {http://dx.doi.org/10.1109/MCS.2015.2495000}
  {\path{doi:10.1109/MCS.2015.2495000}}.

\bibitem{MO-VM-NM:20}
M.~Ogura, V.~M. Preciado, and N.~Masuda.
\newblock Optimal containment of epidemics over temporal activity-driven
  networks.
\newblock {\em SIAM Journal on Applied Mathematics}, 79(3), 2019.
\newblock \href {http://dx.doi.org/10.1137/18M1172740}
  {\path{doi:10.1137/18M1172740}}.

\bibitem{KP-CE-JSW-JSS:17}
K.~{Paarporn}, C.~{Eksin}, J.~S. {Weitz}, and J.~S. {Shamma}.
\newblock Networked {SIS} epidemics with awareness.
\newblock {\em IEEE Transactions on Computational Social Systems},
  4(3):93--103, 2017.
\newblock \href {http://dx.doi.org/10.1109/TCSS.2017.2719585}
  {\path{doi:10.1109/TCSS.2017.2719585}}.

\bibitem{PEP-CLB-AN:17}
P.~E. Par\'{e}, C.~L. Beck, and A.~Nedi\'{c}.
\newblock Epidemic processes over time-varying networks.
\newblock {\em IEEE Transactions on Control of Network Systems}, 5(3), 2017.
\newblock \href {http://dx.doi.org/10.1109/TCNS.2017.2706138}
  {\path{doi:10.1109/TCNS.2017.2706138}}.

\bibitem{PEP-JL-CLB-AN-TB:17}
P.~E. Par\'{e}, J.~Liu, C.~L. Beck, A.~Nedi\'{c}, and T.~Ba\c{s}ar.
\newblock Multi-competitive viruses over static and time-varying networks.
\newblock In {\em {A}merican {C}ontrol {C}onference}, pages 1685--1690, 2017.
\newblock \href {http://dx.doi.org/10.23919/ACC.2017.7963195}
  {\path{doi:10.23919/ACC.2017.7963195}}.

\bibitem{RPS-AV:01}
R.~Pastor-Satorras and A.~Vespignani.
\newblock Epidemic spreading in scale-free networks.
\newblock {\em Physical Review Letters}, 86(14):3200--3203, 2001.
\newblock \href {http://dx.doi.org/10.1103/PhysRevLett.86.3200}
  {\path{doi:10.1103/PhysRevLett.86.3200}}.

\bibitem{FDS-CS-PvM:13}
F.~D. Sahneh, C.~Scoglio, and P.~{Van~Mieghem}.
\newblock Generalized epidemic mean-field model for spreading processes over
  multilayer complex networks.
\newblock {\em IEEE/ACM Transactions on Networking}, 21(5):1609--1620, 2013.
\newblock \href {http://dx.doi.org/10.1109/TNET.2013.2239658}
  {\path{doi:10.1109/TNET.2013.2239658}}.

\bibitem{MTS-ARB-PH-GL-AJ:20}
M.~T. Schaub, A.~R. Benson, P.~Horn, G.~Lippner, and A.~Jadbabaie.
\newblock Random walks on simplicial complexes and the normalized {Hodge}
  1-{Laplacian}.
\newblock {\em SIAM Review}, 62(2):353--–391, 2020.
\newblock \href {http://dx.doi.org/10.1137/18M1201019}
  {\path{doi:10.1137/18M1201019}}.

\bibitem{JHS:16}
J.~H. Shapiro.
\newblock {\em A Fixed-Point Farrago}.
\newblock Springer, 2016, ISBN 978-3-319-27976-3.

\bibitem{MV:02}
M.~Vidyasagar.
\newblock {\em Nonlinear Systems Analysis}.
\newblock SIAM, 2002, ISBN 9780898715262.
\newblock \href {http://dx.doi.org/10.1137/1.9780898719185}
  {\path{doi:10.1137/1.9780898719185}}.

\bibitem{NJW-CN-GJP:20}
N.~J. {Watkins}, C.~{Nowzari}, and G.~J. {Pappas}.
\newblock Robust economic model predictive control of continuous-time epidemic
  processes.
\newblock {\em IEEE Transactions on Automatic Control}, 65(3):1116--1131, 2020.
\newblock \href {http://dx.doi.org/10.1109/TAC.2019.2919136}
  {\path{doi:10.1109/TAC.2019.2919136}}.

\end{thebibliography}

\end{document}